\documentclass[12pt]{amsart}
\usepackage{amsmath,amssymb,amsfonts,amsthm,amsopn}
\usepackage{graphicx,tikz}
\usepackage[all]{xy}
\usepackage{amsmath}
\usepackage{multirow, longtable, makecell, caption, array,enumitem}
\usepackage{amssymb}
\usepackage{mathtools}
\mathtoolsset{showonlyrefs}

\usepackage{bookmark}
\usepackage{hyperref,color}

\setlength{\textwidth}{\paperwidth}
\addtolength{\textwidth}{-2.5in}
\calclayout

\usepackage{geometry}
\geometry{a4paper,top=2.5cm,bottom=2.5cm,hmargin=2.5cm,heightrounded,bindingoffset=0mm}

\linespread{1.05}
\setlength{\parskip}{0.4\baselineskip}

\newcommand{\adm}[1]{{\left\vert\kern-0.25ex\left\vert\kern-0.25ex\left\vert #1 
		\right\vert\kern-0.25ex\right\vert\kern-0.25ex\right\vert}}

\newtheorem{theorem}{Theorem}[section]
\newtheorem*{theorem*}{Theorem}
\newtheorem{lemma}[theorem]{Lemma}

\newtheorem{proposition}[theorem]{Proposition}

\newtheorem{remark}[theorem]{Remark}

\theoremstyle{definition}

\usepackage{cellspace} %
\setlength\cellspacetoplimit{5pt}
\setlength\cellspacebottomlimit{5pt}

\setcounter{MaxMatrixCols}{10}

\let\det\relax 
\DeclareMathOperator{\det}{det}

\def\bR{\mathbb{R}}
\def\bC{\mathbb{C}}
\def\bN{\mathbb{N}}

\def\smo{\setminus\{0\}}

\def\cF{\mathcal{F}}

\def\cK{\mathcal{K}}

\def\cS{\mathcal{S}}

\def\cL{\mathcal{L}}

\def\rd{\bR^d}

\def\rnn{\bR^{2n}}
\def\rnp{\bR^{n'}}
\def\rnnp{\bR^{2n'}}
\def\rns{\bR^{n''}}
\def\rnns{\bR^{2n''}}

\def\rdd{\bR^{2d}}
\def\rdq{\bR^{4d}}

\def\lan{\langle}
\def\ran{\rangle}

\def\wt{\widetilde}
\def\wh{\widehat}

\def\w{\mathrm{w}}

\def\S0{S^0_{0,0}}

\def\Bd'{B_{\delta'}}

\def\cBd'{\bar{B}_{\delta'}}

\def\bots{{\sigma \bot}}
\def\Sbots{S^{\sigma \bot}}

\newcommand\restr[2]{{
		\left.\kern-\nulldelimiterspace 
		#1 
		\right|_{#2} 
}}
\def\a{\alpha}
\def\b{\beta}
\def\g{\gamma}

\def\irp{\int_0^{+\infty}}
\def\ird{\int_{\rd}}
\def\irdd{\int_{\rdd}}

\def\Spdr{\mathrm{Sp}(d,\bR)}
\def\Spdc{\mathrm{Sp}(d,\bC)}

\def\Sp{\mathrm{Sp}}
\newcommand*\dd[1]{\mathop{}\!\mathrm{d}#1}

\def\diag{\mathrm{diag}}
\def\re{\operatorname{Re}}
\def\im{\operatorname{Im}}

\DeclarePairedDelimiter{\abs}{\lvert}{\rvert}
\DeclarePairedDelimiter{\norm}{\lVert}{\rVert}
\DeclarePairedDelimiter\floor{\lfloor}{\rfloor}

\makeatletter
\let\oldabs\abs
\def\abs{\@ifstar{\oldabs}{\oldabs*}}

\makeatletter
\let\oldnorm\norm
\def\norm{\@ifstar{\oldnorm}{\oldnorm*}}

\begin{document}
	
	\title[Wave packet analysis of quadratic semigroups]{Wave packet analysis of semigroups generated by quadratic differential operators}
	
	\author[S. I. Trapasso]{S. Ivan Trapasso}
	\address{Dipartimento di Scienze Matematiche ``G. L. Lagrange'', Politecnico di Torino, corso Duca degli Abruzzi 24, 10129 Torino, Italy}
	\email{salvatore.trapasso@polito.it}
	
	\subjclass[2020]{35S10, 42B37, 47D06, 35K05, 42B35, 35H99}
	\keywords{Quadratic differential operators, pseudo-differential operators, modulation spaces, Gabor wave packets, heat equation, Hermite operator, twisted Laplacian.}
	
	\begin{abstract}
We perform a phase space analysis of evolution equations associated with the Weyl quantization $q^\w$ of a complex quadratic form $q$ on $\rdd$ with non-positive real part. In particular, we obtain pointwise bounds for the matrix coefficients of the Gabor wave packet decomposition of the generated semigroup $e^{tq^\w}$ if $\re q \le 0$ and the companion singular space associated is trivial. This result is then leveraged to achieve a comprehensive analysis of the phase regularity of $e^{tq^\w}$ with $\re q \le 0$, thereby extending the $L^2$ analysis of quadratic semigroups initiated by Hitrik and Pravda-Starov to general modulation spaces $M^p(\rd)$, $1 \le p \le \infty$, with optimal explicit bounds. 
	\end{abstract}
	\maketitle
	
\section{Introduction}
\subsection{Quadratic operators}
The topic of this note is the analysis of evolution problems such as
\begin{equation}
	\begin{cases}
		\partial_t u(t,x)=q^\w u(t,x) \\
		u(0,\cdot)=f,
	\end{cases} \qquad (t,x) \in [0,+\infty) \times \rd,
\end{equation} where initially $f \in \cS(\rd)$ and $q^\w(x,D)$ is the Weyl quantization of a complex-valued quadratic form $q$ on $\rdd$, that is
\begin{equation}\label{eq-intro-weylq}
	q^\w (x,D) f(x) = (2\pi)^{-d} \irdd e^{i(x-y)\cdot \xi} q\Big( \frac{x+y}{2}, \xi \Big) f(y) \dd{y} \dd{\xi}. 
\end{equation} Note that this is a genuine differential operator, since for $p(x,\xi)=x^\a \xi^\b$ with $\a,\b \in \bN^d$ such that $\abs{\a+\b}\le 2$ we have
\begin{equation}
	p^\w(x,D) = \frac{x^\a  D^\b + D^\b x^\a}{2}, \qquad D = -i\partial_x. 
\end{equation} 
Moreover, the operator $q^\w$ may or may not be of elliptic type, depending on whether the symbol $q$ satisfies the constraint
\begin{equation}
	(x,\xi) \in \rdd, \quad q(x,\xi) = 0 \quad \Longrightarrow \quad (x,\xi)=0.
\end{equation}

A considerable body of knowledge on the features of the generated solution semigroup $e^{tq^\w}$, $t \ge 0$, has accumulated over the years. For instance, a full spectral picture in the elliptic case has been set out by Sj\"ostrand in a fundamental contribution \cite{sjo_74}. It was then H\"ormander who showed in \cite{horm-mehler-95} that the (non-selfadjoint) maximal closed realization of $q^\w$ on $L^2$ with domain $\{u \in L^2(\rd) : q^\w u \in L^2(\rd)\}$ coincides with the graph closure of its restriction to (an endomorphism of) the Schwartz class $\cS(\rd)$. 

Much more can be said in the case where the real part of the form $q$ has a constant sign. In particular, if $\re q \le 0$ then the generated semigroup $e^{tq^\w}$ is contractive on $L^2(\rd)$, and can be viewed as a Fourier integral operator with Gaussian distribution kernel or as a one-parameter family of pseudo-differential operators whose Weyl symbols can be explicitly computed. Some preparation is needed in order to state the result just mentioned in more precise terms. 

Let $Q \in \bC^{2d,2d}$ be the symmetric matrix naturally associated with $q$ by virtue of the relation $q(x,\xi) = (x,\xi) \cdot Q(x,\xi)$. One can then determine the companion \textit{Hamilton map}, or fundamental matrix, that is $F=JQ \in \bC^{2d,2d}$, where 
\begin{equation}\label{eq-intro-defJ}
	J = J_d = \begin{bmatrix}
		O_d & I_d \\ -I_d & O_d
	\end{bmatrix} \in \bR^{2d,2d}
\end{equation} is the canonical symplectic matrix, $O_d$ and $I_d$ denoting respectively the $d\times d$ null and identity matrix. The matrix $J$ induces the standard symplectic structure on the phase space $T^*\rd \simeq \rdd$ via the form
\begin{equation}
	\sigma(z,w) \coloneqq Jz \cdot w, \qquad z,w \in \rdd,
\end{equation} so that $F$ is uniquely identified by the relation $
	q(z;w) = \sigma(z,Fw)$, where $q(\cdot;\cdot)$ stands for the polarized version of the quadratic form $q$. 
	
	We are now ready to state the result obtained by H\"ormander \cite[Theorem 4.2]{horm-mehler-95}: there exists a family of symbols $\Theta_t \in \cS'(\rdd)$ such that $e^{tq^\w}= \Theta_t^\w$ and, after setting 
	\begin{equation}
		\mathfrak{E} = \{ s \ge 0 : \det(\cos(sF)) = 0\},
	\end{equation} we have the explicit representation
\begin{equation}\label{eq-intro-mehler}
	\Theta_t(z) = (\det^{-1/2} \cos(tF)) \exp(\sigma(z,\tan(tF)z)), \qquad z \in \rdd, \quad t \notin \mathfrak{E}.
\end{equation} This relation is known as the \textit{generalized Mehler formula}, since it encompasses the eponymous expression for the symbol of the quantum harmonic oscillator propagator $e^{-t(x^2 + D^2)}$ first obtained (on other grounds, with $d=1$) in \cite{mehler}, that is $\Theta_t(z) = (\cosh t)^{-d} e^{-(\tanh t) \abs{z}^2}$. 

Our essential review of the literature on quadratic operators cannot avoid mentioning some remarkable contributions in the non-elliptic scenario, which is obviously trickier. In particular, Hitrik and Pravda-Starov showed in \cite{hitrik} that if $\re q \le 0$ then there exists a phase space subspace $S \subseteq T^*\rd$, called the \textit{singular space} associated with $q$, enjoying the following property: if $S$ is a symplectic subspace, then $e^{tq^\w}$ is smoothing along every direction of the symplectic orthogonal complement $\Sbots$. More precisely, the singular space is defined by
\begin{equation}\label{eq-intro-singsp}
	S \coloneqq \Bigg( \bigcap_{j=0}^{2d-1} \ker [\re F(\im F)^j] \Bigg) \cap \rdd.
\end{equation} 
The singular space $S$ is thus the largest subspace where the underlying dissipative and dispersive flows are decoupled, and the former are completely inhibited. Figuratively speaking, $S$ is designed to detect, at the phase space (i.e., classical) level, the mixing dynamics possibly arising from non-commutativity between $(\re q)^\w$ and $(\im q)^\w$.

Note that the assumption on the (inherited) symplectic structure of $S$ is trivially satisfied in the case where the symbol $q$ enjoys a restricted ellipticity condition, that is 
\begin{equation}
	(x,\xi) \in S, \quad q(x,\xi) = 0 \quad \Longrightarrow \quad (x,\xi)=0.
\end{equation}
In fact, this condition suffices to prove a non-elliptic counterpart of the aforementioned results by Sj\"ostrand on the spectral structure of $e^{tq^\w}$. In addition, the analysis carried out in \cite[Theorem 1.2.3]{hitrik} on the FBI-Bargmann transform side shows that the quadratic semigroup enjoys exponential decay like
\begin{equation}
	\| e^{tq^\w} \|_{L^2 \to L^2} \le Ce^{-at}, \qquad t \ge 0,
\end{equation} for suitable $C,a>0$ independent of $t$, if and only if $q$ is not purely imaginary, which is in turn equivalent to have a  non-degenerate (yet symplectic) singular space $S \ne \rdd$. 

In the interesting case of zero singular space, viz.\ $S= \{0\}$, the optimal rate of exponential decay in the $L^2\to L^2$ bound is deduced from the spectral analysis carried out in \cite{ottobre}. More recently, again in the case $S=\{0\}$, the same circle of ideas and techniques (involving metaplectic FBI transforms and Fourier integral operators) allowed obtaining small time asymptotic $L^2 \to L^\infty$ bounds in \cite{hitrik_18}, while more general $L^p \to L^q$ bounds with $1 \le p \le q \le \infty$ were proved in \cite{white_22} in both the small and large time regimes. 

\subsection{The point of view of wave packet analysis}

The previous discussion suggests very clearly that the analysis of quadratic operators and related semigroups can benefit from ideas and techniques of phase space analysis. In this note we take a different angle on the matter and perform phase space analysis in the sense of wave packets decompositions. To be precise, a Gabor wave packet generated by a given $g \in \cS(\rd)\smo$ is a function of the form 
\begin{equation}
	\pi(z) g(y) \coloneqq (2\pi)^{-d/2} e^{i \xi \cdot y} g(y-x), \qquad z=(x,\xi)\in \rdd.
\end{equation} Intuitively speaking, the family $\{\pi(z)g : z \in \rdd\}$ induces a continuous, uniform covering of phase space by means of shifts along $(x,\xi) \in T^*\rd$, which in turn triggers a continuous wave packet decomposition of a temperate distribution $f \in \cS'(\rd)$, the coefficients being given by
\begin{equation}
	V_g f(z) \coloneqq \lan f,\pi(z)g \ran = (2\pi)^{-d/2} \ird e^{-i \xi \cdot y} f(y) \overline{g(y-x)} \dd{y}.
\end{equation} 
On the other hand, the correspondence $T^*\rd \ni z \mapsto \lan f, \pi(z)g \ran \in \bC$ can be rightfully viewed as a phase space representation of $f$, usually called the \textit{Gabor transform} of $f$, which encodes its joint time-frequency features. For instance, the study of the regularity of a function can be lifted to the phase space level, where the (possibly mixed, weighted) summability of the corresponding Gabor transform provides finer information. This is exactly the rationale behind the introduction of \textit{modulation spaces} \cite{fei_81}: for $1 \le p \le \infty$, the space $M^p(\rd)$ is the collection of $f \in \cS'(\rd)$ such that, for some (in fact, any) $g \in \cS(\rd)\smo$, 
\begin{equation}
	\norm{f}_{M^p} \coloneqq \norm{V_g f}_{L^p} = \Big( \irdd \abs{V_g f(z)}^p \dd{z} \Big)^{1/p} < \infty. 
\end{equation}
It turns out that modulation spaces $(M^p(\rd),\norm{\cdot}_{M^p})$ are a family of Banach spaces, increasing with $p$, a distinguished member being $L^2(\rd) = M^2(\rd)$ (with equivalent norms). They thus offer a manageable complement to the classical triple of Fourier analysis $\cS \hookrightarrow L^2 \hookrightarrow \cS'$, especially when it comes to the analysis of (pseudo-)differential operators. 

In general terms, the wave packet analysis of a linear operator $T \colon \cS(\rd) \to \cS'(\rd)$ revolves around the in-depth study of its \textit{Gabor matrix} --- that is, given $g,\gamma \in \cS(\rd)\smo$, the quantity
\begin{equation}
	K_{T}^{(\gamma,g)}(w,z) \coloneqq \lan T \pi(z)g, \pi(w) \gamma \ran, \qquad z,w \in \rdd. 
\end{equation} It is fairly evident that this kernel encodes the whole information on the action of $T$ at the wave packet level. Indeed, it can be used to lift the analysis of operators to phase space via the following identity:
\begin{equation}\label{eq-intro-lens}
	V_\gamma(T f)(w) = \irdd K_{T}^{(\gamma,g)}(w,z) V_g f(z) \dd{z}, 
\end{equation} where it is assumed that $\norm{g}_{L^2}=1$ for convenience (cf.\ \eqref{eq-inv-stft} below). Seminal contributions in connection with this approach are due to Tataru \cite{tataru}, while more recent and diverse applications can be found in \cite{CR_book,gro_book,NT_book}. 

As far as quadratic operators are concerned, the point of view of Gabor analysis has been widely employed in the context of semigroups generated by purely imaginary quadratic forms, that is $e^{tq^\w} = e^{it (\im q)^\w}$. These are well known examples of \textit{metaplectic operators} \cite{degoss_11_book,folland}, and can be equivalently viewed as Schr\"odinger propagators for quantum systems with quadratic Hamiltonians --- so that wave packets provide natural and meaningful models to test the features of the corresponding unitary dynamics and related phenomena of dispersive nature. 

In accordance with this spirit, it has been recently proved in \cite{CNT_dispdiag20} that the Gabor matrix of a metaplectic operator is particularly well-structured, as evidenced by the following pointwise bound: for every $N \in \bN$ and $z,w \in \rdd$, 
\begin{equation}\label{eq-intro-metap-dispdiag}
	\abs{\lan e^{it(\im q)^\w} \pi(z)g, \pi(w) \gamma \ran} \le C (\sigma_1(t)\cdots \sigma_d(t))^{-1/2} (1+\abs{M_t(w-H_tz)})^{-N},
\end{equation} for a suitable $C>0$ that does not depend on $t$ nor $q$. We emphasize that all the relevant phenomena characterizing the metaplectic evolution become noticeable at this level (see also Section \ref{sec-metap} for additional details):
\begin{itemize}
	\item The Gabor matrix enjoys fast decay away from the graph of the symplectic flow $t \mapsto H_t=e^{2tJ(\im Q)}$ --- that is, Gabor wave packets are approximately evolved by the quantum propagator $e^{it(\im q)^\w}$ along the phase space trajectories of the corresponding classical system. 
	
	\item The factors $\sigma_1(t) \ge \cdots \ge \sigma_d(t) \ge 1$ are the largest singular values of $H_t$, which account for dispersive phenomena \cite{cauli}. For instance, in the free particle case (viz., $q(x,\xi)=i\xi^2$) the product $(\sigma_1(t)\cdots\sigma_d(t))^{-1/2}$ coincides, up to irrelevant constants, with the standard dispersive factor $(1+\abs{t})^{-d/2}$.  
	
	\item The matrix $M_t \in \bR^{2d,2d}$ produces a phase space envelope along the classical trajectories that has been linked to the well known quantum spreading phenomenon incurred by wave packets \cite{dirac}.  
\end{itemize}
Results of this type can be then leveraged to investigate boundedness on modulation spaces. To put it simply, the bound in \eqref{eq-intro-metap-dispdiag} and the relation \eqref{eq-intro-lens} imply that $e^{it(\im q)^\w}$ roughly acts as a convolution operator in phase space, hence continuity on modulation spaces follows (see \cite[Corollary 3.5]{CNT_dispdiag20}): 
\begin{equation}\label{eq-intro-metbound}
	\norm{e^{it(\im q)^\w}f}_{M^p} \le C (\sigma_1(t)\cdots \sigma_d(t))^{\abs{\frac1p - \frac12}} \norm{f}_{M^p}.
\end{equation} This result shows that the Schr\"odinger propagator $e^{it(\im q)^\w}$ preserves the time-frequency regularity/concentration of the initial datum, while similar conclusions are known to be false in $L^p(\rd)$ unless $p=2$. Note also that the unitary $L^2$ dynamic is blind to the dispersive effects occurring on other modulation spaces. 

It might seem quite surprising that such a comprehensive Gabor analysis of metapletic operators does not have a counterpart in the case where $\re q \ne 0$. Indeed, except for isolated cases (e.g., the standard heat equation \cite{chen}, the Ginzburg-Landau model in \cite{wang} or the semilinear parabolic models in \cite{nicola_14}), a systematic approach to dissipative semigroups from the wave packet analysis point of view appears to be missing. For instance, the first results on the modulation space regularity of solutions of the heat equation in the case of the Hermite operator (that is, the quantum harmonic oscillator with elliptic symbol $q(x,\xi)=-(x^2+\xi^2)$) have been obtained only recently in \cite{bhimani_23, bhimani_21,cordero_21}. In particular, the analysis of \cite{cordero_21} shares the spirit described so far, since boundedness results occur as a byproduct of almost-diagonalization estimates for the Gabor matrix of $e^{tq^\w}$ such as 
\begin{equation}\label{eq-intro-hermite-dec}
	\abs{\lan e^{-t(x^2+D^2)} \pi(z)g, \pi(w)\gamma \ran} \le C(T) (1+\abs{w-z})^{-N}, \qquad N \in \bN,
\end{equation} for every given $T>0$ and $t\in [0,T]$. Although this is enough to obtain continuity results via Young's inequality in \eqref{eq-intro-lens}, such estimates fail to explicitly detect the exponential decay over time that could be naturally expected from the already mentioned results by Hitrik and Pravda-Starov in \cite{hitrik}. These difficulties are circumvented in \cite{bhimani_21}, where the sharp time decay is obtained:
\begin{equation}\label{eq-intro-expdec}
	\norm{e^{-t(x^2+D^2)}}_{M^p \to M^p} \le Ce^{-td}, \quad t \ge 0, 
\end{equation} although by means of a significantly different approach that involves the spectral theory and pseudo-differential calculus of globally elliptic positive operators. On the other hand, when it comes to transfer these results to the Gabor matrix, duality and embeddings of modulation spaces imply
	\begin{equation}
		\abs{\lan e^{-t(x^2+D^2)} \pi(z)g, \pi(w)\gamma \ran} \le C e^{-td} \norm{g}_{M^1}\norm{\g}_{M^1},
	\end{equation} thus failing now to detect the fact that Gabor wave packets approximately diagonalize every Weyl operator with a smooth bounded symbol with bounded derivatives of any order --- see \cite{bastianoni,gro-rze_08,tataru}, also for more general H\"ormander symbol classes.  

A deeply related model is the so-called special Hermite operator or \textit{twisted Laplacian} \cite{strichartz_89,thangavelu_98,thangavelu_93,thangavelu_91,wong_05}, which is defined on even-dimensional spaces via an exotic twisted quantization of the harmonic oscillator symbol \cite{degoss_08}. The result is a degenerate elliptic differential operator, which comes with a non-trivial singular space. In \cite{T_tams} we have recently obtained a fairly comprehensive characterization of the modulation space boundedness of the corresponding heat semigroup, with sharp exponential decay over time as in \eqref{eq-intro-expdec}, by exploiting the underlying connections with the Hermite operator via transference principles or the special twisted structure of the propagator. 

We must also mention an important contribution on the microlocal analysis of quadratic operators by Pravda-Starov, Rodino and Wahlberg \cite{pravda_18}. The propagation of singularities for general quadratic operators has been investigated in terms of the so-called \textit{Gabor wave front set} \cite{horm-quad-91,rodino_14,rodino_21}, which is designed to detect the directions of phase space where the Gabor transform of a distribution lacks Schwartz regularity --- see also \cite{carypis,CNR_15,CNT_dispdiag20,wahlberg,white_22_jfa} for generalizations and applications. Perhaps not surprisingly, it turns out that the singular space plays a key role in the context of regularization and propagation of global phase space singularities. To be more precise, the singularities of the initial datum that are located outside the singular space of $q$ are instantaneously smoothed out, while those that are initially inside the singular space remain trapped there and move along the symplectic flow associated with the imaginary part of the symbol, which regulates the metaplectic/dispersive dynamics component of $e^{tq^\w}$ as already seen above. 

\subsection{Main results} To the best of our knowledge, the results just discussed are the only ones currently available in connection with the wave packet analysis of general quadratic operators. The present contribution aims to fill this gap in the literature along two main directions, that are: obtaining a satisfactory counterpart of \eqref{eq-intro-metap-dispdiag} for purely dissipative quadratic semigroups; then extending the $L^2$ analysis of Hitrik and Pravda-Starov in \cite{hitrik} to general modulation spaces. 

In connection with the first goal, our main result is a bound for the magnitude of the Gabor matrix of a quadratic semigroup with non-positive real part and zero singular space, giving simultaneous evidence of the peculiar phase space phenomena (i.e., approximate diagonalization and dissipation, as expected from \eqref{eq-intro-hermite-dec}), in complete analogy to \eqref{eq-intro-metap-dispdiag} in the case of metaplectic operators. 

\begin{theorem}\label{thm-gabmatdec}
	Let $q$ be a complex quadratic form on $\rdd$ with a non positive real part, namely $\re q \le 0$. Let $Q \in \bC^{2d,2d}$ be the companion symmetric matrix and $F=JQ$ the corresponding Hamilton map. Assume that $F$ is diagonalizable and that the singular space \eqref{eq-intro-singsp} associated with $q$ is trivial, that is $S=\{0\}$.
	
	There exists $\mu>0$, depending only on $q$ and $d$, such that for all $t\ge 0$, $N \in \bN$, $g,\gamma \in \cS(\rd)\smo$ and $z,w \in \rdd$, we have
	\begin{equation}\label{eq-gabmatdec}
		\abs{\lan e^{tq^\w} \pi(z)g,\pi(w)\gamma \ran} \le C e^{-t\mu} (1+\abs{w-z})^{-2N},
	\end{equation} for a constant $C>0$ that does not depend on $t,w,z$. 
	
	As a consequence, $e^{tq^\w} \colon \cS(\rd) \to \cS'(\rd)$ extends to a bounded operator on every modulation space $M^p(\rd)$, $1\le p \le \infty$, satisfying
	\begin{equation}\label{eq-intro-modsp-dec}
		\norm{e^{tq^\w}}_{M^p \to M^p} \le C' e^{-t\mu}, 
	\end{equation} for a suitable constant $C'>0$. 
\end{theorem}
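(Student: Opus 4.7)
The plan is to leverage the generalized Mehler formula \eqref{eq-intro-mehler} to express the Gabor matrix of $e^{tq^\w}$ as a short-time Fourier transform of an explicit Gaussian symbol, and then exploit that Gaussian structure to extract simultaneous exponential decay in $t$ and polynomial decay in $\abs{w-z}$. Since $e^{tq^\w} = \Theta_t^\w$ for $t \notin \mathfrak{E}$, the standard identity relating the Gabor matrix of a Weyl operator to the STFT of its symbol gives, up to a unimodular phase,
\begin{equation*}
	\abs{\lan \Theta_t^\w \pi(z)g, \pi(w)\gamma \ran} = \abs{V_{W(\gamma,g)}\Theta_t\!\left(\tfrac{w+z}{2}, J(w-z)\right)},
\end{equation*}
where $W(\gamma,g) \in \cS(\rdd)$ is the cross-Wigner transform. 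The target bound \eqref{eq-gabmatdec} is thereby reduced to a Schwartz-window STFT estimate on $\Theta_t$, with polynomial decay to be established in the frequency variable $J(w-z)$, which is equivalent to polynomial decay in $\abs{w-z}$.

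The symbol $\Theta_t(u) = \det^{-1/2}\cos(tF)\,\exp(\sigma(u, \tan(tF)u))$ is a complex Gaussian, and under $\re q \le 0$ the matrix in the exponent has non-positive real part (cf.\ the FBI-Bargmann viewpoint in \cite{hitrik}). For any fixed $t \notin \mathfrak{E}$, the STFT $V_{W(\gamma,g)}\Theta_t$ can therefore be computed explicitly via the product formula for Gaussian STFTs, or bounded by iterated integration by parts exploiting the Schwartz decay of $W(\gamma,g)$; either approach yields
\begin{equation*}
	\abs{V_{W(\gamma,g)}\Theta_t\!\left(\tfrac{w+z}{2}, J(w-z)\right)} \le C_t\,(1 + \abs{w-z})^{-2N},
\end{equation*}
with $C_t$ depending explicitly on the amplitude $\abs{\det^{-1/2}\cos(tF)}$ and on algebraic quantities controlled by $\tan(tF)$.

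The crux is showing that $C_t$ decays exponentially as $t \to \infty$ at some rate $\mu > 0$, uniformly off $\mathfrak{E}$. The assumption $S = \{0\}$ combined with diagonalizability of $F$ is decisive: paired with $\re q \le 0$, it forces each eigenvalue of $F$ (always paired in $\pm$ configuration by the symplectic structure) to admit a non-zero real part, whence $\abs{\det^{-1/2}\cos(tF)}$ decays like $e^{-t\mu}$ with $\mu = \sum_j \abs{\re \lambda_j}$ over $d$ suitably chosen eigenvalues of $F$. In the Hermite case $q = -(x^2+\xi^2)$ one recovers the expected $(\cosh t)^{-d} \sim e^{-dt}$, consistent with \eqref{eq-intro-expdec}. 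The main obstacle is handling the singularities of the Mehler symbol on $\mathfrak{E}$ while keeping the constant $C$ uniform in $t$; my strategy is to fix $T_0 > 0$ with $[0, T_0] \cap \mathfrak{E} = \emptyset$, to prove \eqref{eq-gabmatdec} by direct STFT analysis on $[0, T_0]$, and to propagate to all $t \ge 0$ via the semigroup property, writing $t = k T_0 + r$ with $r \in [0, T_0)$ and using $e^{tq^\w} = (e^{T_0 q^\w})^{k} \circ e^{r q^\w}$. Polynomial off-diagonal decay in the Gabor matrix is preserved under composition by Schur-type estimates applied to \eqref{eq-intro-lens}, and the exponential amplitudes multiply to produce the global factor $e^{-t\mu}$.

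Finally, the modulation-space bound \eqref{eq-intro-modsp-dec} follows at once from \eqref{eq-gabmatdec} together with the lensing identity \eqref{eq-intro-lens}: choosing $2N > 2d$ one has $(1+\abs{\cdot})^{-2N} \in L^1(\rdd)$, and Young's inequality yields $\norm{V_\gamma(e^{tq^\w}f)}_{L^p} \le C' e^{-t\mu} \norm{V_g f}_{L^p}$, which translates directly into the desired estimate on $\norm{e^{tq^\w}f}_{M^p}$ via equivalence of modulation-space norms under change of window.
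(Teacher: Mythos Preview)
Your reduction of the Gabor matrix to the STFT of the Mehler symbol and the final Young-inequality step for \eqref{eq-intro-modsp-dec} are correct and match the paper. The core of the argument, however, contains a genuine error and a resulting detour that does not close.

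The eigenvalue claim is backwards. You assert that $S=\{0\}$ forces each eigenvalue of $F$ to have non-zero \emph{real} part and set $\mu=\sum_j\abs{\re\lambda_j}$, but your own Hermite check refutes this: there $F=-J$ has eigenvalues $\pm i$, so $\re\lambda_j=0$ while $S=\{0\}$ and $\mu=d$. The correct statement (Remark~\ref{rem-F1eig}) is that $S=\{0\}$ forces $F$ to have no \emph{real} eigenvalue, i.e.\ $\im\lambda_j\ne 0$, and then $\mu=\sum_j\im\lambda_j$ (with the convention $\im\lambda_j>0$); this is what makes $\abs{\cos(t\lambda_j)}\gtrsim e^{t\im\lambda_j}$ and drives the decay of $\abs{\det^{-1/2}\cos(tF)}$. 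A direct consequence you miss is that $\det\cos(tF)=\prod_j\cos(t\lambda_j)^2\ne 0$ for \emph{all} $t\ge 0$, so $\mathfrak{E}=\emptyset$ (Proposition~\ref{prop-tantF}) and the Mehler symbol is globally defined. Your semigroup-iteration workaround is therefore unnecessary, and in fact does not work as written: Schur composition of Gabor kernels with decay $(1+\abs{w-z})^{-2N}$ introduces a fixed subconvolution constant $c_N>1$ at each step, so after $k$ iterations the bound carries an extra $c_N^{\,k}$ that competes with $e^{-kT_0\mu}$; you would need $c_N e^{-T_0\mu}<1$, which you cannot arrange if $T_0$ is constrained to stay below a putative first point of $\mathfrak{E}$. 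The paper instead works for all $t\ge 0$ at once: it uses the symplectic Autonne--Takagi factorization (Proposition~\ref{prop-takagi}) to diagonalize $\tan(tF)$, reducing the normalized symbol $\theta_t=\det^{1/2}\cos(tF)\,\Theta_t$ to a tensor product of one-dimensional Gaussians $e^{i\tan(t\lambda_j)x^2}$, and then checks by hand that every $x$-derivative of these is bounded uniformly in $t$. This uniform-in-$t$ control is precisely where the diagonalizability of $F$ enters, and it is the step your sketch (``computed explicitly \ldots\ or bounded by iterated integration by parts'') leaves unaddressed.
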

To be more precise (cf.\ Proposition \ref{prop-tantF} and Remark \ref{rem-F1eig} below), the assumptions imply that the (possibly repeated) eigenvalues of $F$ occur in opposite non-real pairs $\pm \lambda_j \in \bC$ with $\im \lambda_j \ne 0$, $j = 1,\ldots, d$. If the labels are chosen in such a way that $\im \lambda_j >0$, then the exponent $\mu$ is given by
\begin{equation}\label{eq-intro-mu}
	\mu = \sum_{j=1}^d \im \lambda_j, 
\end{equation} and the resulting exponential decay over time in \eqref{eq-intro-modsp-dec} is actually sharp (cf.\ Remark \ref{rem-sharpdec}).

It is worthwhile to briefly comment on the two assumptions that are made on the Hamilton matrix $F$. Assuming the diagonalizability of $F$ cannot be avoided with the current proof strategy, which relies on a generalization of Williamson's symplectic diagonalization (Proposition \ref{prop-takagi}) to obtain the explicit time decay factor, and on controlling the phase space regularity of the Weyl symbol $\Theta_t$ appearing in the Mehler-H\"ormander formula \eqref{eq-intro-mehler} to obtain fast decay away from the diagonal --- this is indeed a consequence of the fact that $\sup_{(y,\eta) \in \rdq} \abs{(1+\abs{\eta})^{2N}V_G \Theta_t(y,\eta)}< \infty$ for every $N\in \bN$ and $G \in \cS(\rdd)$, which actually entails a weighted modulation space regularity of $\Theta_t$ (cf.\ \eqref{eq-minftys} below). Nevertheless, there is reason to believe that this streamlined approach can be adjusted with some effort to encompass non-diagonalizable Hamiltonian matrices as well, but we preferred to avoid additional technicalities on this occasion (e.g., the occurrence of Jordan blocks \cite{fassb}). Moreover, as evidenced in Section \ref{sec-examples}, this assumption happens to be satisfied by many relevant models. 

On the other hand, assuming triviality of the singular space might seem a more serious restriction --- see also \cite{hitrik_18,hitrik_08,ottobre,pravda_11,white_22} for additional results on quadratic operators in presence of this constraint. In fact, although the proof leaves room for some slight relaxation of this assumption (e.g., it suffices for $F$ to have non-real eigenvalues), the setting of Theorem \ref{thm-gabmatdec} virtually captures the standard behavior of the dissipative component of every quadratic semigroup, and there is a way to always reduce to this case provided that the effect of the complementary dispersive components is duly taken into account. To make this heuristics more precise, we need to recall a clever decomposition of the quadratic form $q$ introduced in \cite{hitrik} --- see also Proposition \ref{prop-split} below. In short, for every general quadratic form $q$ with $\re q\le 0$ and symplectic singular space $S$, there exists a symplectic transformation $\chi \colon \rdd \to \rdd$ such that
\begin{equation}
		(q\circ \chi)(x,\xi) = q_1(x',\xi') + iq_2(x'',\xi''), \qquad \chi(x,\xi) = (x',x'';\xi',\xi''), 
\end{equation} where $(x',\xi') \in \rnnp$ and $(x'',\xi'')\in \rnns$ are symplectic coordinates on $\Sbots$ and $S$ respectively --- hence the decomposition $T^*\rd = \Sbots \oplus^{\bots} S$ holds. In particular, $q_1 = \restr{q}{\Sbots}$ is a complex quadratic form on $\rnnp$, with $\re q_1 \le 0$ and trivial singular space $S_1 = \{0\}$. On the other hand, $q_2 = \im \restr{q}{S}$ is a real quadratic form on $\rnns$ that thus encodes the self-adjoint features of the quadratic semigroup $e^{tq^\w}$. 

This result, in accordance with those in \cite{pravda_18} already discussed, suggests that the phase space analysis should be carried separately on the singular space and its symplectic complement, in order to disentangle the dissipative and dispersive effects of the quadratic semigroup. After that, by combining the results in Theorem \ref{thm-gabmatdec} and \eqref{eq-intro-metap-dispdiag}, we obtain a pointwise bound for the Gabor matrix of $e^{tq^\w}$ (cf.\ \eqref{eq-gabmat-mix} below), which in turn allows us to extend the $L^2$ analysis performed in \cite{hitrik} to the entire range of modulation spaces, hence achieving a fairly complete picture of the phase space regularity of a general quadratic semigroup. 

\begin{theorem}\label{thm-main-mod}
	Let $q$ be a complex quadratic form on $\rdd$ with a non positive real part, namely $\re q \le 0$. Let $Q \in \bC^{2d,2d}$ be the companion symmetric matrix and $F=JQ$ the corresponding Hamilton map.
	
	Assume that the singular space $S$ defined in \eqref{eq-intro-singsp} is a symplectic subspace of $T^*\rd$, with $\dim S = 2n$, $0 \le n < d$, and that $\restr{F}{\Sbots}$ is diagonalizable. 
	
	For all $t \ge 0$, the operator $e^{tq^\w} \colon \cS(\rd) \to \cS'(\rd)$ extends to a bounded operator on $M^p(\rd)$, $1 \le p \le \infty$. In particular, there exist $C>0$ independent of $t$ such that
	\begin{equation}
		\|e^{tq^\w}\|_{M^p \to M^p} \le C e^{-\mu t} (\sigma_1(t) \cdots \sigma_{n}(t))^{ \abs{\frac12-\frac1p}}, 
	\end{equation} where $\mu=\mu(d-n)$ is the quantity defined in \eqref{eq-intro-mu} associated with $\restr{q}{\Sbots}$, and   $\sigma_1(t) \ge \ldots \ge \sigma_n(t) \ge 1$ are the $n$ largest singular values of the symplectic flow $e^{2t\im(\restr{F}{S})} \in \Sp(n,\bR)$. 
\end{theorem}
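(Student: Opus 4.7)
The plan is to reduce to a normal form via metaplectic covariance, exploit the resulting tensor-product structure of the semigroup, and then combine Theorem~\ref{thm-gabmatdec} (on the $q_1$-block) with the metaplectic estimates \eqref{eq-intro-metap-dispdiag} and \eqref{eq-intro-metbound} (on the $q_2$-block).

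First, the symplectic transformation $\chi$ from the decomposition $q\circ\chi=q_1+iq_2$ recalled just before the statement lifts to a metaplectic operator $\hat\chi$ giving $e^{tq^\w}=\hat\chi\, e^{t(q\circ\chi)^\w}\, \hat\chi^{-1}$ by metaplectic covariance of Weyl calculus. Since $\hat\chi,\hat\chi^{-1}$ are $t$-independent metaplectic operators, \eqref{eq-intro-metbound} applied at a single time to a purely imaginary quadratic form generating each of them implies that they are bounded on every $M^p(\rd)$, so it suffices to estimate $e^{t(q\circ\chi)^\w}$. Because the two symbols act on disjoint sets of variables, $q_1^\w$ and $(iq_2)^\w$ commute and the semigroup factorizes as a tensor product
\begin{equation*}
e^{t(q\circ\chi)^\w}=e^{tq_1^\w}\otimes e^{itq_2^\w}
\end{equation*}
of operators acting on $L^2(\bR^{d-n})$ and $L^2(\bR^n)$ respectively.

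Next I would exploit the tensorization at the Gabor-matrix level. With separable windows $g=g'\otimes g''$ and $\gamma=\gamma'\otimes\gamma''$ where $g',\gamma'\in\cS(\bR^{d-n})\smo$ and $g'',\gamma''\in\cS(\bR^n)\smo$, and writing $z=(z',z'')$, $w=(w',w'')$ in the splitting $\rdd=\Sbots\oplus^{\bots} S$, the Gabor matrix of the tensor-product operator factorizes as
\begin{equation*}
\lan e^{t(q\circ\chi)^\w}\pi(z)g,\pi(w)\gamma\ran=\lan e^{tq_1^\w}\pi(z')g',\pi(w')\gamma'\ran\,\lan e^{itq_2^\w}\pi(z'')g'',\pi(w'')\gamma''\ran.
\end{equation*}
Theorem~\ref{thm-gabmatdec} controls the first factor (its hypotheses on $q_1$ are inherited from those on $\restr{F}{\Sbots}$), while \eqref{eq-intro-metap-dispdiag} applies to the second: $iq_2$ is purely imaginary and its underlying symplectic flow is exactly $e^{2t\im(\restr{F}{S})}\in\Sp(n,\bR)$, with the required singular values $\sigma_1(t)\ge\dots\ge\sigma_n(t)$. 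Multiplying the two pointwise bounds yields the combined estimate \eqref{eq-gabmat-mix}, displaying the exponential decay $e^{-\mu t}$ with $\mu=\mu(d-n)$ from the $q_1$-block, the dispersive prefactor $(\sigma_1(t)\cdots\sigma_n(t))^{-1/2}$ from the $q_2$-block, and fast off-diagonal decay in both variables.

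Finally I would integrate the combined bound via the inversion formula \eqref{eq-intro-lens}. The $q_1$-block contributes a convolution kernel in $z'$ that is uniformly in $L^1\cap L^\infty$ (with prefactor $e^{-\mu t}$), while the $q_2$-block contributes precisely the metaplectic kernel of \eqref{eq-intro-metap-dispdiag}, whose mapping properties on $M^p(\bR^n)$ are governed by \eqref{eq-intro-metbound}. Splitting the phase-space integration along $\rdd=\Sbots\oplus^{\bots} S$, applying Young's (or Schur's) inequality on the first block and the metaplectic estimate on the second yields the claim; alternatively, one may derive the endpoints $p=1,\infty$ via Schur's test on both blocks and recover the intermediate exponents by Riesz--Thorin interpolation with the trivial $p=2$ bound. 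The main technical obstacle I anticipate lies precisely in this last step: organizing the integration so that the exponential factor $e^{-\mu t}$ is preserved intact, and so that the dispersive exponent emerges as the sharp $\abs{1/2-1/p}$, rather than as a coarser bound obtained by naively estimating each factor in the wrong norm.
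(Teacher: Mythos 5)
Your proposal follows essentially the same route as the paper: the Hitrik--Pravda-Starov splitting plus metaplectic covariance, a tensorized Gabor-matrix bound combining Theorem \ref{thm-gabmatdec} on the $q_1$-block with the metaplectic dispersive estimate on the $q_2$-block (this is exactly \eqref{eq-gabmat-mix}), then Schur/Young at the endpoints $p=1,\infty$ and complex interpolation against the $p=2$ bound obtained from $M^2=L^2$, unitarity of the metaplectic tensor factor and the $L^2$ decay of $e^{tq_1^\w}$ --- i.e.\ your ``alternative'' route is precisely how the paper extracts the sharp exponent $\abs{1/2-1/p}$. The only imprecision is justifying $M^p$-boundedness of $\hat\chi$ by applying \eqref{eq-intro-metbound} ``at a single time'' (the exponential map is not onto $\Spdr$); this is harmless since every metaplectic operator is a product of finitely many such propagators, and the paper avoids the issue altogether by absorbing $\mu(\chi)$ into the windows via the covariance \eqref{eq-metap-intertw}.
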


Note that $S= \rdd$ if and only if $\re q = 0$ identically \cite[Theorem 1.2.3]{hitrik}. In that case $e^{tq^\w} = e^{it(\im q)^\w}$ is a metaplectic operator and the previous result holds with no exponential decay (i.e., $\mu(d)=0$), so that \eqref{eq-intro-metbound} is recovered. Moreover, the assumption of diagonalizability of the Hamilton matrix inherited from Theorem \ref{thm-gabmatdec} is actually required to hold only for the restriction $\restr{F}{\Sbots}$ to the symplectic orthogonal complement of the singular space. 

We also stress that the assumption on the symplectic structure of $S$ cannot be relaxed in general --- simple counterexamples are given by the semigroups generated by $q(x,\xi)=-x^2$ or $q(x,\xi)=-\xi^2$ (standard heat equation), whose Hamiltonian matrices are nilpotent and the singular spaces are Lagrangian planes (i.e., satisfying $\Sbots = S$). As a matter of fact, suitable modifications of the proof of Theorem \ref{thm-gabmatdec}, or standard arguments from pointwise and Fourier multipliers theory on modulation spaces, show that $e^{tq^\w}$ is bounded $M^p(\rd)\to M^p(\rd)$ for every $1 \le p \le \infty$, the operator norm being uniformly bounded with respect to $t$. 

A brief outline of the manuscript follows. In Section \ref{sec-prep} we review some preliminaries for later use, especially including the generalized Williamson theorem (in fact, a symplectic variant of the Autonne-Takagi factorization) in Proposition \ref{prop-takagi}. Section \ref{sec-proofs} is then devoted to the proof of the main results, namely Theorems \ref{thm-gabmatdec} and \ref{thm-main-mod}. Finally, some applications are discussed in Section \ref{sec-examples}.

\section{Preparation}\label{sec-prep} 
\subsection{Notation} We agree that $\bN$ denotes the set of non-negative integers. The integer part of $x \in \bR$ is denoted by $\floor{x}$. 

The inner product of $x,y \in \rd$ is denoted by $x\cdot y$, and we write $x^2$ in place of $x\cdot x$. 

The inner product between $f,g \in L^2(\rd)$ is defined by
\begin{equation}    \lan f,g \ran = \ird f(y) \overline{g(y)} \dd{y}, 
\end{equation} and extends to the duality pairing between a temperate distribution $f \in \cS'(\rd)$ and a function $g \in \cS(\rd)$ in the Schwartz class upon agreeing that $\lan f,g \ran = f(\overline{g})$.

Given $X,Y \in \bR$, we write $X \lesssim_\lambda Y$ as a shorthand for the statement $X \le C Y$, where $C>0$ is a constant that does not depend on $X,Y$ but may depend on the parameter $\lambda$. We also write $X \asymp_\lambda Y$ when both $X \lesssim_\lambda Y$ and $Y \lesssim_\lambda X$ hold. 

Recall that the inhomogeneous magnitude of $x \in \rd$ is denoted by $\lan x \ran \coloneqq (1+|x|^2)^{1/2}$. We also introduce the functions $v_s(x) \coloneqq (1+|x|)^s$, $s \in \bR$. Note that $\lan x \ran^s \asymp v_s(x)$ for every $s \in \bR$. We recall for later use the subconvolutivity property from \cite[Lemma 11.1.1]{gro_book}: 
	\begin{equation}\label{eq-subconv}
	v_{-s} * v_{-s}(x) \lesssim_s v_{-s}(x), \qquad \forall \, s>d.
\end{equation}

	The tensor product $f_1 \otimes f_2$ of two temperate distributions $f_1,f_2 \in \cS'(\rd)$ is the unique element of $\cS'(\rdd)$ satisfying the identity
\begin{equation}
	\lan f_1\otimes f_2, g_1 \otimes g_2 \ran = \lan f_1,g_1 \ran \lan f_2,g_2 \ran, \quad g_1,g_2 \in \cS(\rd),
\end{equation} where $g_1\otimes g_2 \in \cS(\rdd)$ is defined as usual by $g_1 \otimes g_2(x,y) = g_1(x)g_2(y)$, $(x,y) \in \rdd$. 

The tensor product of two operators $T_1,T_2 \colon \cS'(\rd) \to \cS'(\rd)$ is the unique operator $T_1 \otimes T_2 \colon \cS'(\rdd)\to \cS'(\rdd)$ such that
\begin{equation}
	(T_1\otimes T_2)(u_1 \otimes u_2) = T_1u_1 \otimes T_2u_2, \quad \forall u_1,u_2 \in \cS'(\rd). 
\end{equation}

The direct sum $A \oplus B \in \bC^{n,n}$ of two matrices $A,B \in \bC^{n,n}$ is the block diagonal matrix defined as follows:
\begin{equation}
	A\oplus B \coloneqq \begin{bmatrix} A & O \\ O & B \end{bmatrix}.
\end{equation}
We also write $\diag(A,B)$ in place of $A\oplus B$, thus extending the standard notation $\diag(\delta_1,\ldots,\delta_n) \in \bC^{n,n}$ for the diagonal matrix with entries $\delta_1,\ldots,\delta_n \in \bC$. 		

\subsection{Gabor analysis}
	The Fourier transform is normalized here as follows:
\begin{equation}
	\cF f(\xi)= \wh{f}(\xi) \coloneqq (2\pi)^{-d/2} \ird e^{-i\xi \cdot x}f(x) \dd{x}.
\end{equation} 

The Gabor transform of $f \in \cS'(\rd)$ with respect to the window $g \in \cS(\rd)\smo$ is defined, as already anticipated in the Introduction, by
\begin{equation}
	V_g f(z) \coloneqq \lan f,\pi(z)g \ran = \cF (f \overline{g(\cdot - x)})(\xi), \qquad z=(x,\xi)\in \rdd,
\end{equation} where $\pi(z)$ denotes the phase space shift along $z=(x,\xi) \in T^*\rd$:
\begin{equation}
	\pi(z)g(y) = (2\pi)^{-d/2} e^{i\xi \cdot y} g(y-x), \qquad y \in \rd.
\end{equation}

We collect below some basic properties of the Gabor transform --- see \cite{gro_book} for proofs and additional details. 

\begin{lemma} \label{lem-stft} For every $f,g,h,\gamma \in \cS(\rd)\smo$ and $u,w,z \in \rdd$ we have: 
	\begin{itemize}
		\item Schwartz regularity: for every $N \ge 0$, 
		\begin{equation}
			\abs{V_g f (z)} \lesssim_N \lan z \ran^{-N}. 
		\end{equation}
		
		\item Covariance formula:
	\begin{equation}
			\abs{V_{\pi(z)g}{\pi(w)\gamma}(u)} = \abs{V_g \gamma(u-w+z)}. 
		\end{equation}		
		
		\item Orthogonality relation:
	\begin{equation}
		\lan V_g f, V_\gamma h \ran_{L^2(\rdd)} = \lan f,h \ran_{L^2(\rd)} \overline{\lan g,\gamma \ran_{L^2(\rd)}}. 
	\end{equation}
	
		\item Inversion formula: if $f \in \cS'(\rd)$ and $\lan \g,g \ran \ne 0$, then
		\begin{equation}\label{eq-inv-stft}
			f = \frac{1}{\lan \gamma, g \ran} \irdd V_g f(z) \pi(z) \gamma \dd{z},
		\end{equation} where the identity is meant in the sense of distributions. 
	\end{itemize}
	\end{lemma}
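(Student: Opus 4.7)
My plan is to establish the four properties in the stated order, relying entirely on the representation $V_g f(z) = \cF(f\,\overline{g(\cdot - x)})(\xi)$ and elementary Fourier analysis. For the Schwartz regularity I would decouple the decay in $z=(x,\xi)$ into uniform decay in $\xi$ and uniform decay in $x$. Decay in $\xi$ follows by integration by parts, using $e^{-i\xi\cdot y} = (1+\abs{\xi}^2)^{-N}(1-\Delta_y)^N e^{-i\xi\cdot y}$:
\[
(1+\abs{\xi}^2)^N V_g f(z) = (2\pi)^{-d/2}\ird e^{-i\xi\cdot y}(1-\Delta_y)^N\bigl(f(y)\overline{g(y-x)}\bigr)\dd{y},
\]
and the $L^1_y$ norm of the integrand is controlled uniformly in $x$ by Schwartz seminorms of $f$ and $g$. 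For $x$-decay I would insert a Peetre-type inequality $\lan x\ran^N \lesssim_N \lan y\ran^N \lan y-x\ran^N$ inside the same integral: the factor $\lan y\ran^{-N}$ is absorbed by the rapid decay of $f$ and its derivatives, and $\lan y-x\ran^{-N}$ by that of $g$. Combining the two bounds yields the joint rapid decay.

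The covariance formula is a direct computation based on the composition rule $\pi(u)\pi(z)=e^{i\alpha(u,z)}\pi(u+z)$ with a real phase $\alpha$ (and analogously for $\pi(w)$): one rewrites
\[
V_{\pi(z)g}\pi(w)\gamma(u) = \lan \pi(w)\gamma, \pi(u)\pi(z)g\ran = e^{i\beta}\lan\gamma, \pi(u-w+z)g\ran
\]
for a real phase $\beta$, and taking absolute values eliminates $\beta$. The orthogonality relation is Moyal's formula: freezing $x$ and applying Plancherel in $\xi$ to the Fourier representations of $V_g f(x,\cdot)$ and $V_\gamma h(x,\cdot)$ yields
\[
\ird V_g f(x,\xi)\overline{V_\gamma h(x,\xi)}\dd{\xi} = \ird f(y)\overline{h(y)}\,\overline{g(y-x)}\gamma(y-x)\dd{y};
\]
integrating in $x$ and invoking Fubini (justified by the Schwartz decay already established) separates the variables into $\lan f,h\ran\,\overline{\lan g,\gamma\ran}$.

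Finally, the inversion formula follows from orthogonality by a weak-type argument: pairing the proposed identity against an arbitrary $\phi\in\cS(\rd)$ and recognising the outcome as the orthogonality relation applied with $h=\phi$ produces $\overline{\lan\gamma,g\ran}\lan f,\phi\ran$, which after division by $\overline{\lan\gamma,g\ran}$ gives the claim. The main technical obstacle is the extension of this weak argument to $f\in\cS'(\rd)$: one must first certify that $V_g f$ has at most polynomial growth in $z$ so that the pairing with the rapidly decaying $V_\gamma\phi$ converges absolutely, and then interpret $\irdd V_g f(z)\pi(z)\gamma\dd{z}$ weakly as an element of $\cS'(\rd)$. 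Both points reduce to the fact that $V_g \colon \cS'(\rd)\to \cS'(\rdd)$ is the transpose of the Schwartz-class mapping $V_g\colon \cS(\rdd)\to \cS(\rd)$ controlled in the first step, together with the standard characterisation of tempered distributions by polynomial growth of their Gabor transforms.
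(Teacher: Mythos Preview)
Your proof sketch is correct and follows the standard arguments found in Gr\"ochenig's textbook. Note, however, that the paper does not actually prove this lemma: it is stated as a collection of well-known facts, with the sentence ``We collect below some basic properties of the Gabor transform --- see \cite{gro_book} for proofs and additional details'' immediately preceding it. So there is no ``paper's own proof'' to compare against; you have simply supplied the textbook argument that the author chose to cite rather than reproduce.

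One small slip worth flagging: in your last paragraph you describe $V_g\colon \cS'(\rd)\to\cS'(\rdd)$ as the transpose of ``$V_g\colon \cS(\rdd)\to\cS(\rd)$''. The directions are reversed here --- it is the adjoint $V_g^*$ (the synthesis map $F\mapsto\irdd F(z)\pi(z)g\dd z$) that takes $\cS(\rdd)$ to $\cS(\rd)$, and its transpose then extends $V_g$ to tempered distributions. This does not affect the substance of your argument, only the bookkeeping.
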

	
	The phase space summability of the Gabor transform characterizes the so-called modulation spaces. Given $1 \le p,q \le \infty$ and $g \in \cS(\rd)$, the modulation space $M^{p,q}(\rd)$ is the Banach space of distributions $f \in \cS'(\rd)$ for which the norm
	\begin{equation}\label{eq-modsp-norm}
		\norm{f}_{M^{p,q}} \coloneqq \Big( \ird \Big( \ird \abs{V_g f(x,\xi)}^p \dd{x} \Big)^{q/p} \dd{\xi} \Big)^{1/q} 
	\end{equation} is finite, with obvious modifications in the case where $p=\infty$ or $q=\infty$. We write $M^p(\rd)$ if $q=p$. The reader is addressed to \cite{CR_book,gro_book} for additional details. 
	
\subsubsection{Weyl pseudo-differential operators}
We already defined the Weyl quantization in the Introduction, cf.\ \eqref{eq-intro-weylq}. More generally, the pseudo-differential operator $\sigma^\w \colon \cS(\rd) \to \cS'(\rd)$ with Weyl symbol $\sigma \in \cS'(\rdd)$ can be defined in the sense of distributions by requiring that
\begin{equation}
	\lan \sigma^\w f,g \ran = \lan \sigma, W(g,f) \ran, \qquad f,g \in \cS(\rd),
\end{equation} where $W(g,f)$ is a sesquilinear phase space representation known as the \textit{(cross-)Wigner transform} of $f,g$:
	\begin{equation}
	W(g,f)(z) \coloneqq (2\pi)^{-d} \ird e^{-i\xi \cdot y} g(x+y/2) \overline{f(x-y/2)} \dd{y}, \qquad z=(x,\xi) \in \rdd.
\end{equation} 

A straightforward computation shows that the Wigner transform satisfies the following covariance property: 
\begin{equation}
	\abs{W(\pi(u)f,\pi(v)g)(z)} = \abs{\pi\Big(\frac{u+v}{2},J(u-v)\Big) W(f,g)(z)}, \qquad u,v,z \in \rdd.
\end{equation} This relation can be used to unveil the connection between the Gabor matrix of a Weyl operator and the Gabor transform of its symbol: if $g,\gamma \in \cS(\rd)$ and $z,w \in \rdd$, 
\begin{align}\label{eq-gabmat-sym}
	\abs{\lan \sigma^\w \pi(z)g,\pi(w)\gamma \ran} & = \abs{\lan \sigma, W(\pi(w)\gamma,\pi(z)g) \ran} \\
	& = \abs{\lan \sigma, \pi\Big(\frac{w+z}{2},J(w-z)\Big) W(\gamma,g)\ran} \\
	& = \abs{ V_{G} \sigma \Big(\frac{w+z}{2}, J(w-z)\Big)}, 
\end{align} where $G=W(\g,g) \in \cS(\rdd)$. It is therefore clear that to obtain off-diagonal decay for the Gabor matrix of $\sigma^\w$ it suffices that the Gabor transform of $\sigma$ decays with respect to the frequency variable, say
\begin{equation}
	\sup_{u \in \rdd} \abs{V_G \sigma(u,v)} \lesssim \lan v \ran^{-s}, 
\end{equation} for some $s>0$. Equivalently, this condition is met if $\sigma$ belongs to a weighted $M^\infty$ space, that is
\begin{equation}\label{eq-minftys}
	\norm{\sigma}_{M^\infty_{0,s}} \coloneqq \sup_{(u,v) \in \rdq} \lan v \ran^s\abs{V_G \sigma(u,v)} < \infty. 
\end{equation}
The interested reader may wish to consult \cite{CR_book,gro_book,NT_book} for further information on this approach --- see also \cite{T_JDE,wahl_vec} for vector-valued models. 

\subsubsection{Metaplectic operators} \label{sec-metap} 

The group $\Spdc$ of complex symplectic matrices consists of $M \in \bC^{2d,2d}$ satisfying 
\begin{equation}
	M^\top J M = J,
\end{equation} where $J$ is the symplectic matrix defined in \eqref{eq-intro-defJ}. The same condition characterizes real symplectic matrices $M \in \bR^{2d,2d}$ in $\Spdr$. For the sake of readability we omit the dependence on the dimension for $J$, or the identity/null matrices, when the context is clear. 

Recall from \cite{degoss_11_book,folland} that the metaplectic representation is a unitary representation on $L^2(\rd)$ of the two-fold covering of the real symplectic group $\Spdr$. Every symplectic matrix $S \in \Spdr$ thus associates, up to the sign, with a unitary operator $\mu(S)$. The metaplectic representation can be equivalently characterized via intertwining with phase-space shift. Given the current normalization, we have indeed
\begin{equation}\label{eq-metap-intertw}
	\pi(S z)=c_S(z) \mu(S) \pi(z) \mu(S)^{-1},\quad z \in \rdd, 
\end{equation} where $c_S(z) \in \bC$ is a suitable phase factor, viz., $\abs{c_S(z)}=1$. 

We also recall that a distinctive property of the Weyl quantization is the so-called \textit{symplectic covariance}: for every $S \in \Spdr$ and $\sigma \in \cS'(\rdd)$, we have
\begin{equation}\label{eq-sympcov}
	(\sigma \circ S)^\w = \mu(S)^{-1} \sigma^\w \mu(S).
\end{equation} 

We review some results proved in \cite{CNT_dispdiag20}, to which the reader is addressed for further details. Recall that the singular values of a symplectic matrix $S \in \Spdr$ occur in $d$ pairs of reciprocal positive real numbers. Consider the labeling such that $\sigma_1 \ge \ldots \ge \sigma_d \ge 1 \ge \sigma_d^{-1} \ldots \ge \sigma_1^{-1}$, and set $\Sigma=\diag(\sigma_1,\ldots,\sigma_d)$. An \textit{Euler decomposition} of $S \in \Sp$ is nothing but a symplectic version of the standard SVD factorization. To be precise, there exist (non-unique) orthogonal and symplectic matrices $U,V \in \bR^{2d,2d}$ such that $S=U^\top (\Sigma \oplus \Sigma^{-1}) V$. The triple $(U,V,\Sigma)$ thus identifies an Euler decomposition of $S$. 

\begin{theorem}\label{thm-gabmatdisp}
	Let $h \colon \rnn \to \bR$ be a real quadratic form, $H \in \bR^{2n,2n}$ being the companion symmetric matrix. For every $t \in \bR$, the corresponding Schr\"odinger propagator $e^{ith^\w}$ coincides (up to the sign) with the metaplectic operator $\mu(H_t)$, where $H_t = e^{2JHt}\in \Spdr$.
	
	Moreover, for every $g, \gamma \in \cS(\rd)\smo$ and $N >0$ there exists $C>0$ such that, for every Euler decomposition $(U_t,V_t,\Sigma_t)$ of $H_t$, we have	 
	\begin{equation}\label{eq-dispdiag}
		\abs{\lan e^{ith^\w} \pi(z)g, \pi(w)\gamma \ran} \le C (\det \Sigma_t)^{-1/2} (1+|D_t U_t(w-H_t z)|)^{-N}, \quad z,w \in \rdd,  
	\end{equation} where $D_t = \Sigma_t^{-1} \oplus I$. 
\end{theorem}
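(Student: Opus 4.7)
The first claim---that $e^{ith^\w}=\pm\mu(H_t)$---is a direct specialization of the generalized Mehler--H\"ormander formula \eqref{eq-intro-mehler} to the purely imaginary case $q=ih$. Equivalently, Stone's theorem applied to the (essentially) skew-adjoint operator $ih^\w$ on $\cS(\rd)$, together with the fact that the infinitesimal metaplectic representation sends $2JH\in\mathfrak{sp}(d,\bR)$ to $\pm ih^\w$, forces the two one-parameter unitary groups $t\mapsto e^{ith^\w}$ and $t\mapsto\mu(H_t)$ to agree up to an overall sign. For the Gabor-matrix estimate, set $S=H_t$ and apply the intertwining \eqref{eq-metap-intertw} in the form $\mu(S)\pi(z)=c_S(z)^{-1}\pi(Sz)\mu(S)$ together with the elementary identity $\abs{\lan\pi(u)f,\pi(v)\gamma\ran}\asymp\abs{V_\gamma f(v-u)}$ to reduce the bound to a single Gabor transform evaluation:
\begin{equation}
	\abs{\lan\mu(S)\pi(z)g,\pi(w)\gamma\ran}\asymp\abs{V_\gamma(\mu(S)g)(w-Sz)}.
\end{equation}
Setting $\eta=w-Sz$ and $G=\mu(S)g$, the task becomes the pointwise estimate $\abs{V_\gamma G(\eta)}\lesssim(\det\Sigma_t)^{-1/2}(1+\abs{D_tU_t\eta})^{-N}$.

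Exploit now the Euler decomposition $S=U_t^\top(\Sigma_t\oplus\Sigma_t^{-1})V_t$. Since $\Spdr\cap\OR$ is compact and the metaplectic representation is continuous on $\cS(\rd)$, the rotated windows $g_1:=\mu(V_t)g$ and $\gamma_t:=\mu(U_t)\gamma$ lie in a bounded subset of $\cS(\rd)$, uniformly in $t$. Applying the covariance identity $\abs{V_\gamma(\mu(U_t^\top)\phi)(\eta)}=\abs{V_{\gamma_t}\phi(U_t\eta)}$ (another direct consequence of \eqref{eq-metap-intertw}) peels off the orthogonal-symplectic factor, while a direct computation gives $\mu(\Sigma_t\oplus\Sigma_t^{-1})g_1(x)=(\det\Sigma_t)^{-1/2}g_1(\Sigma_t^{-1}x)$. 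Writing $(u_1,u_2):=U_t\eta\in\rd\times\rd$, one therefore obtains
\begin{equation}
	\abs{V_\gamma G(\eta)}=(2\pi)^{-d/2}(\det\Sigma_t)^{-1/2}\abs{\ird g_1(\Sigma_t^{-1}y)\overline{\gamma_t(y-u_1)}e^{-iu_2\cdot y}\dd{y}},
\end{equation}
which already isolates the correct dispersive prefactor $(\det\Sigma_t)^{-1/2}$.

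It remains to bound the residual integral by $(1+\abs{\Sigma_t^{-1}u_1}^2+\abs{u_2}^2)^{-N/2}$, a quantity comparable to $(1+\abs{D_tU_t\eta})^{-N}$. Decay in $\lan u_2\ran^{-N}$ follows from $N$ integrations by parts in $y$: the identity $\partial_y^\beta[g_1(\Sigma_t^{-1}y)]=\sigma^{-\beta}(\partial^\beta g_1)(\Sigma_t^{-1}y)$ together with the constraint $\sigma_j\ge 1$ ensures that the derivatives of the dilated integrand inherit the uniform Schwartz bounds of $g_1$. Decay in $\lan\Sigma_t^{-1}u_1\ran^{-N}$ is then produced by a Peetre-type splitting $\Sigma_t^{-1}y=\Sigma_t^{-1}u_1+\Sigma_t^{-1}(y-u_1)$ combined with $\norm{\Sigma_t^{-1}}\le 1$, which yields
\begin{equation}
	\lan\Sigma_t^{-1}y\ran^{-M}\lesssim_M\lan\Sigma_t^{-1}u_1\ran^{-M}\lan y-u_1\ran^{M}
\end{equation}
for every $M>0$; enlarging the decay order on $\gamma_t(y-u_1)$ so as to absorb the factor $\lan y-u_1\ran^M$ and leave an integrable residue in $y$ then closes the argument. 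The main technical obstacle is precisely this anisotropic bookkeeping across scales: the prefactor $(\det\Sigma_t)^{-1/2}$ and the stretched polynomial envelope $(1+\abs{D_tU_t\eta})^{-N}$ must emerge simultaneously from the interplay between the dilation $\mu(\Sigma_t\oplus\Sigma_t^{-1})$ and the uniform Schwartz estimates on the auxiliary windows, without losing any power of a singular value along the way.
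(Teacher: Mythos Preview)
The paper does not give its own proof of this theorem; it is quoted in the preparatory Section~\ref{sec-metap} from \cite{CNT_dispdiag20}, with the reader explicitly referred there for details. Your argument is correct and reconstructs what is essentially the expected proof: reduce via the intertwining \eqref{eq-metap-intertw} to estimating $V_\gamma(\mu(S)g)$ at $w-Sz$, factor $S$ through the Euler decomposition, use compactness of $\Spdr\cap\OR$ to obtain uniform Schwartz control on the rotated windows $\mu(V_t)g$ and $\mu(U_t)\gamma$, and extract both the prefactor $(\det\Sigma_t)^{-1/2}$ and the anisotropic envelope from the explicit dilation $\mu(\Sigma_t\oplus\Sigma_t^{-1})$ by combining integration by parts (for $\lan u_2\ran^{-N}$) with a Peetre splitting (for $\lan\Sigma_t^{-1}u_1\ran^{-N}$). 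One small point worth tightening: to combine the two separate decays into the single weight $(1+\abs{D_tU_t\eta})^{-N}$ you should note explicitly that $(1+\abs{\Sigma_t^{-1}u_1})^{-N}(1+\abs{u_2})^{-N}\le(1+\abs{\Sigma_t^{-1}u_1}+\abs{u_2})^{-N}$, which follows from $(1+a)(1+b)\ge 1+a+b$.
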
 

\subsection{Some results on symplectic diagonalization}
The following result plays a key role in the proof of the main result, and is a symplectic variant of the standard Autonne-Takagi factorization of a complex symmetric matrix \cite{horn}. In particular, sufficient conditions are given for the latter to be symplectically congruent to a diagonal matrix. It can be also viewed as a generalization of the symplectic diagonalization of real, positive definite quadratic forms by Williamson \cite[Theorem 93]{degoss_11_book}, which is indeed recaptured as a special case (although under slightly stronger assumptions). 

\begin{proposition}\label{prop-takagi}
	Consider a complex symmetric matrix $Q \in \bC^{2d,2d}$, $Q^\top = Q$, and the corresponding Hamilton matrix $F=JQ$. 
	\begin{itemize}
		\item The (possibly repeated) eigenvalues of $F$ occur in opposite pairs $\pm \lambda_j$ with $\lambda_j \in \bC$, $j=1, \ldots, d$.  
		\item If $F$ is diagonalizable then there exists a symplectic matrix $P \in \Spdc$ such that 
		\begin{equation}
			P^\top Q P =D, \qquad D=\begin{bmatrix}
				i\Lambda & O \\ O & i\Lambda
			\end{bmatrix}, \qquad \Lambda=\diag(\lambda_1,\ldots, \lambda_d). 
		\end{equation}
		Such a factorization is unique up to relabeling the entries of $\Lambda$ or replacing $\Lambda$ with $-\Lambda$. 
	\end{itemize}
\end{proposition}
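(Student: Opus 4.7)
The plan is to prove parts (i) and (ii) by exploiting the fact that $JF = -Q$ is symmetric, which is equivalent to the algebraic identity $F^\top J + JF = 0$ (that is, $F$ belongs to the complex symplectic Lie algebra). This single relation drives everything.

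For part (i), I rewrite the identity as $J^{-1}F^\top J = -F$, so that $F$ is similar to $-F^\top$. Combined with $\det(F - \mu I) = \det(F^\top - \mu I)$, this yields $\det(F - \mu I) = \det(-F - \mu I) = \det(F + \mu I)$, which is exactly symmetry of the spectrum under $\mu \mapsto -\mu$, producing the desired opposite pairs $\pm \lambda_j$.

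For part (ii), I would first extract from $F^\top J + JF = 0$ the crucial bilinear consequence
\begin{equation}
\sigma(Fv, w) + \sigma(v, Fw) = 0, \qquad v,w \in \bC^{2d}.
\end{equation}
Writing $V_\mu := \ker(F - \mu I)$, if $v \in V_\mu$ and $w \in V_\nu$, this gives $(\mu + \nu)\sigma(v,w) = 0$, so the eigenspaces $V_\mu$ and $V_\nu$ are $\sigma$-orthogonal unless $\nu = -\mu$. Since $F$ is diagonalizable, $\bC^{2d} = \bigoplus_\mu V_\mu$; non-degeneracy of $\sigma$ on the whole space then forces the pairing $\sigma : V_\mu \times V_{-\mu} \to \bC$ to be non-degenerate for each $\mu \ne 0$ (hence $\dim V_\mu = \dim V_{-\mu}$), while $\sigma$ restricts to a non-degenerate form on $V_0$ (which is therefore even-dimensional).

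Equipped with this structure, I would construct an adapted eigenbasis as follows. For each pair $\pm\lambda_j$ with $\lambda_j \ne 0$ I pick any basis of $V_{\lambda_j}$ and use the non-degenerate pairing to pick the $\sigma$-dual basis of $V_{-\lambda_j}$; on $V_0$ (if nontrivial) I invoke the existence of a standard symplectic basis of zero-eigenvectors. Arranging these vectors as columns yields $P_0 \in \Spdc$ satisfying $P_0^{-1} F P_0 = \Lambda \oplus (-\Lambda)$, equivalently (using $Q=-JF$ and $P_0^\top J = J P_0^{-1}$)
\begin{equation}
P_0^\top Q P_0 = \begin{bmatrix} O & \Lambda \\ \Lambda & O \end{bmatrix}.
\end{equation}

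This normal form differs from the target $D = i\Lambda \oplus i\Lambda$, so the remaining step --- which I expect to be the slightly non-obvious part --- is to introduce an explicit symplectic change of basis that accomplishes the conversion. I would verify by direct computation that
\begin{equation}
R := \tfrac{1}{\sqrt{2}} \begin{bmatrix} I & iI \\ iI & I \end{bmatrix} \in \Spdc, \qquad R^\top \begin{bmatrix} O & \Lambda \\ \Lambda & O \end{bmatrix} R = D,
\end{equation}
and then take $P := P_0 R$, which lies in $\Spdc$ and satisfies $P^\top Q P = D$ as required. Uniqueness up to permutation and sign reversal of $\Lambda$ follows from the intrinsic freedom in ordering the eigenvalue pairs and in selecting which representative of $\{+\lambda_j, -\lambda_j\}$ plays the role of $\lambda_j$ --- which, at the level of the basis construction, corresponds to swapping the roles of $V_{\lambda_j}$ and $V_{-\lambda_j}$.
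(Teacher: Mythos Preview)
Your argument is correct, and it is essentially the constructive route that the paper only sketches: the paper disposes of part (i) via the same similarity $F^\top = -J^{-1}FJ$ and, for part (ii), refers to \cite[Theorem 21]{delacruz} while noting that an elementary proof ``along the lines of \cite[Theorem 1]{kim}'' works once one uses that $\diag(\lambda,-\lambda)$ is similar to $iJ\diag(\lambda,\lambda)$. Your construction carries this out explicitly: the $\sigma$-orthogonality of eigenspaces and the Darboux basis on $V_0$ produce the symplectic $P_0$ with $P_0^{-1}FP_0=\Lambda\oplus(-\Lambda)$, and your matrix $R$ is exactly the symplectic intertwiner realizing the similarity the paper singles out, so $P=P_0R$ gives $P^\top QP=D$. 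In short, the paper outsources the proof and gives the hint; you supply the details, and the two match.
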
 The proof of the first claim is a trivial consequence of the similarity $F^\top = -J^{-1}FJ$. A proof of the second part, up to minor notational differences, can be found in \cite[Theorem 21]{delacruz}. In fact, the reader can find there a full characterization of symplectic diagonal congruence, showing that $Q \in \bC^{2d,2d}$ is symplectically congruent to a diagonal matrix if and only if $Q$ is symmetric and $F^2$ is diagonalizable. A more constructive yet elementary proof can be given along the lines of \cite[Theorem 1]{kim}, provided that one conveniently incorporates the fact that the matrices $\diag(\lambda,-\lambda)$ and $iJ\diag(\lambda, \lambda)$ are similar for every $\lambda \in \bC$. 
	
%
	
We now prove how the previous symplectic factorization can be leveraged in connection with the generalized Mehler formula \eqref{eq-intro-mehler}.	
	\begin{proposition} \label{prop-tantF}
		Consider a complex symmetric matrix $Q \in \bC^{2d,2d}$, hence satisfying $Q^\top = Q$, and the corresponding Hamilton matrix $F=JQ$.  
		
		\begin{enumerate}
			\item If $F$ has no real eigenvalue, the matrix $\cos(tF)$ is invertible for every $t \ge 0$ and there exists $\mu >0$, depending only on $Q$ and $d$, such that 
			\begin{equation}
				\abs{\det^{-1/2}(\cos(tF))} \lesssim_{d,Q} e^{-t\mu }. 
			\end{equation}
			\item If $F$ is diagonalizable, then there exists $P \in \Spdc$ such that
			\begin{equation}
				\tan(tF) = J P^{-\top} \begin{bmatrix}
					i\tan(t\Lambda) & O \\ O & i\tan(t\Lambda)
				\end{bmatrix} P^{-1}. 
			\end{equation}
		\end{enumerate}
	\end{proposition}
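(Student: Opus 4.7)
The argument splits along the two assertions; both are linear algebra built on the eigenvalue pairing $\pm\lambda_1,\ldots,\pm\lambda_d$ of $F$ provided by the first part of Proposition \ref{prop-takagi}.

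For part (1), my plan is to read off $\det(\cos(tF))$ directly from the spectrum of $F$. Since the eigenvalues of $\cos(tF)$ are obtained by applying the entire function $z\mapsto \cos(tz)$ to the eigenvalues of $tF$ (which is valid even without diagonalizability, via the Jordan form), and cosine is even, the paired structure $\{\pm\lambda_j\}_j$ gives
\begin{equation}
\det(\cos(tF)) = \prod_{j=1}^d \cos^2(t\lambda_j).
\end{equation}
Writing $\lambda_j=\alpha_j+i\beta_j$ with $\beta_j\ne 0$ under the no-real-eigenvalue hypothesis, one computes $\abs{\cos(t\lambda_j)}^2 = \cos^2(t\alpha_j)+\sinh^2(t\beta_j)>0$, which proves invertibility of $\cos(tF)$ for every $t\ge 0$. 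What remains is a uniform lower bound $\abs{\cos(t\lambda_j)}\gtrsim e^{t\abs{\beta_j}}$: I would obtain it by splitting $t\in[0,T_0]\cup[T_0,+\infty)$ for a suitable $T_0>0$, using continuity and strict positivity on the compact piece (where $\abs{\cos(t\lambda_j)}$ is bounded below by a constant while $e^{t\abs{\beta_j}}$ is bounded above), and the dominance of $\cosh(2t\abs{\beta_j})$ on the tail. Taking the product yields the claim with $\mu=\sum_{j=1}^d\abs{\beta_j}>0$.

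For part (2), diagonalizability of $F$ is assumed, so the second part of Proposition \ref{prop-takagi} produces $P\in\Spdc$ with $P^\top Q P = D\coloneqq\diag(i\Lambda,i\Lambda)$, $\Lambda=\diag(\lambda_1,\ldots,\lambda_d)$. The strategy is first to conjugate $F$ to the simpler matrix $JD$, and then compute $\tan(tJD)$ by hand. Using $Q=P^{-\top}DP^{-1}$ together with the symplectic identity $P^{-1}JP^{-\top}=J$ (obtained by inverting $P^\top JP=J$ and exploiting $J^{-1}=-J$), one rewrites
\begin{equation}
F=JQ=JP^{-\top}DP^{-1}=P\,(JD)\,P^{-1},
\end{equation}
whence $\tan(tF)=P\tan(tJD)P^{-1}$. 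A short calculation based on the identity $(JD)^2=\diag(\Lambda^2,\Lambda^2)$ shows that $\cos(tJD)$ and $\sin(tJD)$ have the block forms $\diag(\cos(t\Lambda),\cos(t\Lambda))$ and $J\,\diag(i\sin(t\Lambda),i\sin(t\Lambda))$ respectively; dividing gives $\tan(tJD)=J\,\diag(i\tan(t\Lambda),i\tan(t\Lambda))$. Finally, the same symplectic identity rearranges to $PJ=JP^{-\top}$, which absorbs the left factor $P$ into the $J$ in front and yields exactly the factorization in the statement.

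The main obstacle I anticipate is the uniform-in-$t$ character of the exponential bound in part (1): the cleanest pointwise estimate $\abs{\cos(t\lambda_j)}^2\ge\sinh^2(t\abs{\beta_j})$ degenerates at $t=0$, so a small-time fix (such as the two-regime split indicated above) is necessary to produce a multiplicative constant independent of $t$. Part (2) is purely algebraic and presents no substantive difficulty beyond bookkeeping the symplectic identities relating $P$, $P^{-\top}$, and $J$.
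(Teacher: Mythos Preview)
Your proposal is correct and follows essentially the same route as the paper: for part (1) both arguments compute $\det(\cos(tF))=\prod_j\cos^2(t\lambda_j)$ from the spectral pairing and then bound $\abs{\cos(t\lambda_j)}$ below by a constant times $e^{t\abs{\beta_j}}$ (the paper does this via the single inequality $1+\cos(2t\alpha_j)/\cosh(2t\beta_j)\ge C_j>0$, which is just a repackaging of your two-regime split); for part (2) your observation that $F=P(JD)P^{-1}$ is a genuine similarity---so $\tan(tF)=P\tan(tJD)P^{-1}$ and only $\tan(tJD)$ needs to be computed---is a mild streamlining of the paper's approach, which instead expands $\sin(tF)$ and $\cos(tF)$ separately through the power-series identity $F^k=JP^{-\top}(DJ)^kJ^{-1}P^{-1}$ before recombining.
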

	
	\begin{proof}
		Let $\{\pm \lambda_j : j = 1,\ldots,d\}$ be the set of eigenvalues of $F$. Without loss of generality, labels are chosen in such a way that $\im(\lambda_j)>0$. 
		
		We first note that if no eigenvalue of $F$ is real then, for every $t\ge 0$, \begin{equation}
			\det(\cos(tF)) = \prod_{j=1}^d \cos(t\lambda_j)^2 \ne 0.
		\end{equation} To be precise, after setting $\a_j=\re(\lambda_j)$ and $\b_j=\im(\lambda_j)$ we have
		\begin{equation}
			\cos(t\lambda_j) = \cos(t\alpha_j)\cosh(t\beta_j)-i\sin(t\alpha_j)\sinh(t\beta_j),
		\end{equation} hence
		\begin{align}
			\abs{\cos(t\lambda_j)} & = (\cos(t\alpha_j)^2\cosh(t\beta_j)^2+\sin(t\alpha_j)^2\sinh(t\beta_j)^2)^{1/2} \\
			& = 2^{-1/2} ( \cos(2t\alpha_j) + \cosh(2t\beta_j))^{1/2} \\ 
			& = 2^{-1/2} \cosh(2t\beta_j)^{1/2} \Big( 1+ \frac{\cos(2t\alpha_j)}{\cosh(2t\beta_j)} \Big)^{1/2}. 
		\end{align}
		It is easy to see that for every $\alpha_j,\beta_j$ there exists $C_j>0$ such that, for every $t\ge 0$, 
		\begin{equation}
			\abs{\cos(t\lambda_j)} > 2^{-1/2}C_j^{1/2} \cosh(2t\beta_j)^{1/2} \ge 2^{-1}C_j^{1/2} e^{t\beta_{j}}.
		\end{equation}
		Therefore,
		\begin{equation}
			\abs{\det^{-1/2}(\cos(tF))}  = \prod_{j=1}^d \abs{\cos(t\lambda_j)}^{-1} < \prod_{j=1}^d  2C_j^{-1/2}e^{-t\beta_j} < C_Q^n e^{-t\sum_{j=1}^n\beta_j},
		\end{equation} where we set $ C_Q \coloneqq 2\max \{C_j^{-1/2} : j=1,\ldots,d\}$. The claim thus follows with $\mu = \sum_{j=1}^d\beta_j$. 
		
		Let us now compute $\tan(tF)$. Recalling the Takagi symplectic factorization of $Q$ from Proposition \ref{prop-takagi}, we have
		\begin{equation}
			F= JQ = JP^{-\top} D P^{-1}, \qquad D=\diag(i\Lambda,i\Lambda), 
		\end{equation} hence for every $k\in \bN$ we recursively have
		\begin{equation}
			F^k = J P^{-\top} D (J D)^{k-1} P^{-1} = J P^{-\top} (DJ)^k J^{-1} P^{-1}.
		\end{equation} 
		Straightforward computations show that 
		\begin{equation}
			(DJ)^k = \begin{cases}
				\diag(\Lambda^k,\Lambda^k) & (k \text{ even}) \\  \diag(i\Lambda^k,i\Lambda^k) J & (k \text{ odd}).			
			\end{cases}
		\end{equation}
		We infer that
		\begin{align}
			\sin(tF) & = \sum_{k=0}^\infty \frac{(-1)^k}{(2k+1)!} t^{2k+1} F^{2k+1} \\
			& = JP^{-\top} \Big( \sum_{k=0}^\infty \frac{(-1)^k}{(2k+1)!} t^{2k+1} (DJ)^{2k+1} \Big) J^{-1} P^{-1} \\ 
			& = J P^{-\top} \diag(i \sin(t\Lambda),i\sin(t\Lambda)) P^{-1}. 
		\end{align} 
		In a similar fashion we obtain
		\begin{align}
			\cos(tF) & = \sum_{k=0}^\infty \frac{(-1)^k}{(2k)!} t^{2k} F^{2k} \\
			& = JP^{-\top} \Big( \sum_{k=0}^\infty \frac{(-1)^k}{(2k)!} t^{2k} (DJ)^{2k} \Big) J^{-1} P^{-1} \\ 
			& = J P^{-\top} \diag(\cos(t\Lambda),\cos(t\Lambda)) J^{-1} P^{-1}, 
		\end{align} 
		hence, using the fact that $P$ is symplectic,
		\begin{align}
			(\cos(tF))^{-1} & = P J \diag(\cos(t\Lambda)^{-1},\cos(t\Lambda)^{-1}) P^{\top}J^{-1} \\
			& = P J \diag(\cos(t\Lambda)^{-1},\cos(t\Lambda)^{-1}) J^{-1} P^{-1} \\
			& = P \diag(\cos(t\Lambda)^{-1},\cos(t\Lambda)^{-1}) P^{-1}.
		\end{align}
		To conclude,
		\begin{align}
			\tan(tF) & = (\sin(tF))(\cos(tF))^{-1} \\
			& = J P^{-\top} \diag(i\tan(t\Lambda),i\tan(t\Lambda))P^{-1} \\
			& = (\cos(tF))^{-1}(\sin(tF)),  
		\end{align} where in the last step we used that $JP^{-\top} = PJ$, since $P^\top$ is a  symplectic matrix. 
	\end{proof}

\section{Proofs of the main results}\label{sec-proofs} 

\subsection{Proof of Theorem \ref{thm-gabmatdec}} We provide now a proof of Theorem \ref{thm-gabmatdec}. First, we make a crucial remark which enables resorting to Proposition \ref{prop-takagi}. 

\begin{remark}\label{rem-F1eig}
	Consider the setting of Theorem \ref{thm-gabmatdec}. The assumption on the triviality of the singular space, that is $S=\{0\}$, implies that the Hamilton matrix $F$ cannot have real eigenvalues. Indeed, if $\lambda \in \bR$ were an eigenvalue of $F$, the space $A_\lambda \coloneqq (\ker(F-\lambda I) \oplus \ker(F+\lambda I)) \cap \rdd$ would be non-empty and $\ne \{0\}$, but it is known that the inclusion $A_\lambda \subset S$ holds for every $\lambda \in \bR$ --- cf.\ \cite[Page 812]{hitrik}.
\end{remark}
	
\begin{proof}[Proof of Theorem \ref{thm-gabmatdec}]
Set $\theta_t = \det^{1/2} (\cos(tF)) \Theta_t$ and $G =W(\gamma,g) \in \cS(\rdd)$. In light of the identity \eqref{eq-gabmat-sym} and Proposition \ref{prop-tantF}, we have 
	\begin{align}
		\abs{\lan e^{tq^\w} \pi(z)g,\pi(w)\gamma \ran} & 
		= \abs{\lan \Theta_t^\w \pi(z)g,\pi(w)\gamma \ran} \\
		& = \abs{\det^{-1/2}(\cos(tF))} \abs{V_G \theta_t \Big( \frac{w+z}{2}, J(w-z) \Big) } \\
		& \lesssim_{d,q} e^{-t\mu} \abs{V_G \theta_t \Big( \frac{w+z}{2}, J(w-z) \Big) }.
	\end{align} It is thus enough to prove that for every integer $N \ge 0$ we have
	\begin{equation}
		|V_G \theta_t(u,v)| \lesssim_{d,N,q} \lan v \ran^{-2N}, \qquad (u,v) \in \rdq.
	\end{equation} 
	In light of Proposition \ref{prop-tantF}, after setting $\wt \theta_t(y) =  e^{y \cdot  \diag(i\tan(t\Lambda),i\tan(t\Lambda))y}$ we have
	\begin{align}
		\abs{V_G \theta_t (u,v)} & \asymp_d \abs{ \int_{\rdd} e^{-i v \cdot y} \exp(Jy \cdot \tan(tF)y) \,  \overline{G(y-u)} \dd{y}} \\
		& = \abs{\int_{\rdd} e^{-i v \cdot y} \exp(Jy \cdot JP^{-\top} \diag(i\tan(t\Lambda),i\tan(t\Lambda))P^{-1}y) \,  \overline{G(y-u)} \dd{y}} \\
		& = \abs{\int_{\rdd} e^{-i P^{\top} v \cdot y} \wt{\theta_t}(y) \,  \overline{G(Py-u)} \dd{y}}.
	\end{align}
	Assume $N>0$ first. Using the identity $(1-\Delta_y)^N e^{- i P^{\top}v \cdot y} = \lan P^{\top}v \ran^{2N}	e^{- i P^{\top}v \cdot y}$, integration by parts yields
	 \begin{align}
	 	\abs{V_G \theta_t (u,v)}
	 	& \lesssim_d \lan v \ran^{-2N} \int_{\rdd} \abs{(1-\Delta_y)^N e^{y \cdot  \diag(i\tan(t\Lambda),i\tan(t\Lambda))y} \,  \overline{G(Py-u)} \dd{y}}. 
	 \end{align} 
	We claim that the remaining integral is finite and uniformly bounded with respect to $t$. Indeed, we can write
	 \begin{align}
	 	\abs{(1-\Delta_y)^N \big( \wt{\theta_t}(y) \overline{G(Py-u)} \big)} & \le \sum_{\abs{\a}+\abs{\b}\le 2N} \abs{c_{\a,\b}} \abs{\partial^\a \wt{\theta_t}(y)} \abs{\partial^\b G(Py-u)},
	 \end{align} for suitable coefficients $c_{\a,\b} \in \bC$. Note that $G\circ P$ is a Schwartz function and $\int_{\rdd} \lan Py-u \ran^{-m} \dd{y} = \| v_{-m}\|_{L^1} < \infty$ for every $m>2d$, so it suffices to prove that every term $\abs{\partial^\a \wt{\theta_t}}$ is bounded, uniformly with respect to $t$.
	 
	 To this aim, setting $y=(\eta,\xi) \in \rdd$ and $\lambda_j = \a_j + i \b_j$ for $\a_j,\b_j \in \bR$ with $\b_j>0$ (cf.\ Remark \ref{rem-F1eig}) yields the explicit representation 
	 \begin{align}
	 	\wt{\theta_t}(y) & = \exp\Big( \sum_{j=1}^d i \tan(t\lambda_j) (\eta_j^2+\xi_j^2) \Big) \\
	 	& = \exp\Big( \sum_{j=1}^d  (-\rho_j(t)+ i \iota_j(t))(\eta_j^2+\xi_j^2) \Big),
	 \end{align}
	 where  
	 \begin{equation}
	 	\rho_j(t) \coloneqq \frac{\sinh(2\beta_j t)}{\cos(2\alpha_j t)+\cosh(2\beta_j t)}, \qquad \iota_j(t) \coloneqq  \frac{\sin(2\alpha_j t)}{\cos(2\alpha_j t)+\cosh(2\beta_j t)}.
	 \end{equation}  
	  A simple tensorization argument shows that it is enough to fix $j\in \{1,\ldots,d\}$ and focus on the function 
	  \begin{equation}
	  	f_j(x) =e^{(-\rho_j(t)+i\iota_j(t)) x^2}, \qquad x \in \bR,
	  \end{equation} proving that $\abs{f_j^{(n)}}$ is uniformly bounded with respect to $t>0$ for every $n\in \bN$. 
	  
	  Set $\gamma_j(t)=-\rho_j(t)+i \iota_j(t)$. It is not difficult to show by induction that \begin{equation}
	  	f_j^{(n)}(x)\asymp_n  \gamma_j(t)^{n-\floor{n/2}} x^{n-2\floor{n/2}} P_{\floor{n/2}}(\gamma_j(t)x^2) e^{\gamma_j(t)x^2},
	  \end{equation} where $P_{\floor{n/2}}$ is a polynomial of degree $\floor{n/2}$ with non-negative integer coefficients. As a result, we have
	  \begin{equation}
	  	\abs{f_j^{(n)}(x)} \lesssim_n \abs{\gamma_j(t)}^{n-\floor{n/2}} \abs{x}^{n-2\floor{n/2}} \Big( \sum_{k=0}^{\floor{n/2}}  \abs{\gamma_j(t)}^k x^{2k} \Big) e^{-\rho_j(t)x^2}. 
	  \end{equation} Recall that there exists $C_j>0$ such that, for every $t\ge 0$,
	  \begin{equation}
	 1+\frac{\cos(2\a_j t)}{\cosh(2\b_jt)} > C_j. 
	  \end{equation}
	  Therefore, we have $0 < \rho_j(t) < C_j^{-1}$ for every $t >0$ and $\rho_j(t)=0$ if and only if $t=0$. Similarly, we have $0 \le \abs{\iota_j(t)} < C_j^{-1}$ for all $t\ge 0$, hence $\abs{\gamma_j(t)} \lesssim_j 1$.
	  
	 It is clear that there exists $t_0>0$ sufficiently small such that $\abs{\iota_j(t)} \lesssim_j \rho_j(t)$, hence $\abs{\gamma_j(t)} \lesssim_j \rho_j(t)$, if $0<t<t_0$. Since the function $g_m(x)=x^m e^{-\rho_j(t)x^2}$, $m\ge 1$, achieves its maximum value at $x \asymp_m \rho_j(t)^{-1/2}>0$, if $0<t<t_0$ we conclude that 
	  \begin{equation}
	  	\abs{f_j^{(n)}(x)} \lesssim_{n,j} \rho_j(t)^{n-\floor{n/2}} \abs{x}^{n-2\floor{n/2}} \Big( \sum_{k=0}^{\floor{n/2}} \rho_j(t)^k x^{2k} \Big) e^{-\rho_j(t)x^2} \lesssim \rho_j(t)^{n/2} \lesssim_j 1. 
	  \end{equation}	  
	  On the other hand, if $t\ge t_0 >0$ then $\rho_j(t) \ge \tanh(2\beta_j t_0)/2>0$, hence $\rho_j(t)^{-1/2} \lesssim_{j,t_0} 1$. The bound $\abs{\gamma_j (t)} \lesssim_j 1$ and the same maximization argument as above now yield
	  \begin{equation}
	  	\abs{f_j^{(n)}(x)} \lesssim_{n,j}  \abs{x}^{n-2\floor{n/2}} \Big( \sum_{k=0}^{\floor{n/2}} x^{2k}  e^{-\rho_j(t)x^2}\Big) \lesssim_{n,j} 1. 
	  \end{equation} 
	  	
	To conclude, note that the claim trivially holds in the case where $N=0$, since the explicit representation of  $\wt{\theta_t}$ obtained above shows that the latter is in fact a bounded function on $\rdd$. 
	
	Continuity on modulation spaces then follows at once from \eqref{eq-intro-lens} via Young's inequality. 
\end{proof}

\begin{remark}
	Note that \eqref{eq-gabmatdec} and Young's inequality actually imply continuity of $e^{tq^\w}$ on every modulation space $M^{p,q}(\rd)$ with $1 \le p,q \le \infty$. In particular, we have
	\begin{equation}
		\norm{e^{tq^\w}}_{M^{p,q}\to M^{p,q}} \lesssim e^{-t \mu}. 
	\end{equation}
	Further generalizations are possible, including weighted versions as well as continuity results in the quasi-Banach regime where $0<p,q<1$ --- the latter can be obtained by means of a discretization argument along a suitable Gabor frame, see \cite{bast_cn,cg,CR_book,galperin,toft} in this connection. The related details are left to the interested reader. 
	
	We also highlight that in \cite{hitrik,pravda_18} it is proved that the semigroup $e^{tq^\w}$ is actually infinitely regularizing, that is $e^{tq^\w}f \in \cS(\rd)$ for every $f \in \cS'(\rd)$, but this fact cannot be apparently recovered from the results above in general.
\end{remark}
	
\begin{remark}\label{rem-sharpdec}
	The time dependence in \eqref{eq-intro-modsp-dec} is sharp. Indeed, recall from \cite[Theorem 1.2.2]{hitrik} that the spectrum of $q^\w$ has the form
	\begin{equation}
		\sigma(q^\w) = \Bigg\{  \sum_{\substack{\lambda \in \sigma(F) \\ \re(-i\lambda)<0}} (r_\lambda+2k_\lambda) (-i\lambda) : k_\lambda \in \bN \Bigg\},
	\end{equation}
	where $r_\lambda$ is the dimension of the space of generalized eigenvectors in $\bC^{2d}$ associated with the eigenvalue $\lambda$, which coincides with the algebraic multiplicity of $\lambda$ in the case where $F$ is diagonalizable. The first eigenvalue in the bottom of the spectrum is thus given by
	\begin{equation}
		\mu_0 = \sum_{\substack{\lambda \in \sigma(F) \\ \re(-i\lambda)<0}} -i\lambda r_\lambda. 
	\end{equation} It is known that it has multiplicity one and its eigenspace is spanned by a ground state $\psi_0 \in \cS(\rd)$ of exponential type (\cite[Theorem 2.1]{ottobre}). Compactness of $e^{tq^\w}$ for every $t>0$ \cite[Proposition 3.1.1]{hitrik} implies the spectral characterization $\sigma(e^{tq^\w}) = \{0\}\cup \{e^{t\gamma} : \gamma \in \sigma(q^\w)\}$, from which we infer
	\begin{equation}
		\|e^{tq^\w}\psi_0 \|_{M^p} 
		= e^{-t\mu} \|\psi_0\|_{M^p},
	\end{equation} where, since the eigenvalue list $\lambda_1,\ldots, \lambda_d$ takes into account possible repetitions,
	\begin{equation}
		\mu = -\re(\mu_0) = -\sum_{\substack{\lambda \in \sigma(F) \\ \re(-i\lambda)<0}} \re(-i\lambda) r_\lambda =  \sum_{j = 1}^{d} \im \lambda_j.
	\end{equation}
\end{remark}

\begin{remark}\label{rem-fractional}
	Assuming the setting of Theorem \ref{thm-gabmatdec}, let $\eta_t \geq 0$ denote the distribution density function of the $\nu$-stable subordinator at time $t$ \cite{bogdan_09}. It is a one-sided subordinator by construction, that is $\eta_t(s)=0$ for $s\leq 0$, and we have the identity
	\begin{equation}\label{eq-subord-prop}
		\irp e^{-us} \, \eta_t(s) \, \dd{s}=e^{-tu^{\nu}}, \qquad  u \ge 0.
	\end{equation} 
	This numerical identity can be used to introduce the ``fractional semigroup'' $T_{q,\nu}(t)$, with $t\ge 0$ and $0<\nu<1$, initially defined on $\cS(\rd)$ by setting 
	\begin{equation}
		T_{q,\nu}(t) f(y) \coloneqq \int_0^{+\infty} e^{sq^\w} f(y) \eta_t(s) \dd{s}, \qquad \forall y \in \rd. 
	\end{equation} 
	Although this definition is somehow artificial, it is consistent with the representations of the semigroup $e^{t(q^\w)^\nu}$ associated with the fractional powers of $q^\w$, whenever the latter can be meaningfully defined (e.g., via Bochner subordination or Balakhrishnan's formula --- as in the case of the harmonic oscillator).
	
In any case, the Gabor matrix of $T_{q,\nu}(t)$ satisfies
		\begin{align}
		\abs{\lan T_{q,\nu}(t) \pi(z)g,\pi(w)\gamma \ran} & \le \int_0^{+\infty} 	\abs{\lan e^{sq^\w} \pi(z)g,\pi(w)\gamma \ran} \eta_t(s) \dd{s} \\
		& \lesssim \lan w-z \ran^{-2N} \int_0^{+\infty} e^{-s\mu} \eta_t(s) \dd{s} \\
		& = e^{-t\mu^{\nu}} \lan w-z \ran^{-2N}. 
	\end{align}
	As a consequence, $T_{q,\nu}(t)$ extends to a bounded operator on every $M^{p,q}(\rd)$, $1 \le p,q \le \infty$, satisfying
	\begin{equation}
		\norm{T_{q,\nu}(t)}_{M^{p,q} \to M^{p,q}} \lesssim e^{-t\mu^\nu}. 
	\end{equation}
	Some examples in the fractional scenario are discussed in Section \ref{sec-examples} below.
\end{remark}

\subsection{Proof of Theorem \ref{thm-main-mod}}

In order to give a proof of Theorem \ref{thm-main-mod}, it is necessary to give a precise account of the symplectic decomposition of the symbol $q$ anticipated in the Introduction. 

\begin{proposition}[{\cite{hitrik}}]\label{prop-split}
	Let $q$ be a complex quadratic form on $\rdd$ with a non positive real part, namely $\re q \le 0$. Let $Q \in \bC^{2d,2d}$ be the companion symmetric matrix and $F=JQ$ the corresponding Hamilton map.

Let $S= \bigcap_{j=1}^{2d-1} \ker (\re F(\im F)^j)\cap \rdd$ be the singular space of $q$. Assume that $S$ is a symplectic subspace and $S \ne \rdd$. 
	
There exists a symplectic transformation $\chi \in \Spdr$ such that 
\begin{equation}
\chi(x,\xi) = (x',x'';\xi',\xi'') \in \rdd,
\end{equation} where $(x',\xi') \in \rnnp$ and $(x'',\xi'')\in \rnns$ are symplectic coordinates on $\Sbots$ and $S$ respectively --- hence $\rdd = \Sbots \oplus^{\bots} S$.  

In particular, we have the decomposition
\begin{equation}
	(q\circ \chi)(x,\xi) = q_1(x',\xi') + iq_2(x'',\xi''),
\end{equation} where:
\begin{itemize}
	\item $q_1= \restr{q}{\Sbots}$ is the complex quadratic form on $\rnnp$ defined by 
	\begin{equation}
		q_1(x',\xi') = \sigma((x',\xi'), F_1 (x',\xi')),
	\end{equation}
	where $F_1 = \restr{F}{\Sbots}$ is the corresponding Hamilton map. We have that $\re q_1 \le 0$ and the singular space $S_1$ associated with $q_1$ is trivial, that is $S_1 = \{0\}$.
	
	\item $q_2$ is the real quadratic form on $\rnns$ defined by
	\begin{equation}
		q_2(x'',\xi'') = \sigma((x'',\xi''), \im F_2(x'',\xi'')),
	\end{equation} where $F_2 = \restr{F}{S}$ is the Hamiltonian map of $\restr{q}{S}$. 
\end{itemize}
\end{proposition}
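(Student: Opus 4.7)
The plan is to carry out the argument in three stages: construct the symplectic change of coordinates, derive the splitting of $q$ using invariance properties of $F$, and verify that the new singular space of $q_1$ is trivial. Throughout, the key algebraic fact from the definition of $S$ is that $\re F$ vanishes on $S$ (take $j=0$), and that $\im F$ preserves $S$ (for the latter, note that $\re F(\im F)^{j+1}z = 0$ holds directly from $z\in S$ when $j\le 2d-2$, while the $j=2d-1$ case is handled by Cayley--Hamilton applied to $\im F$, which rewrites $(\im F)^{2d}$ as a polynomial in lower powers of $\im F$).

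First, since $S$ is symplectic by assumption, its symplectic orthogonal $\Sbots$ is also symplectic, and we have the internal direct sum $\rdd=\Sbots\oplus S$. Applying the linear version of Darboux's theorem (i.e.\ a symplectic Gram--Schmidt process) to each summand produces symplectic bases of $\Sbots$ and $S$ of dimensions $2n'$ and $2n''$ with $n'+n''=d$. Reordering the concatenated basis so that the position coordinates and the momentum coordinates of each summand are grouped together yields a linear symplectic isomorphism $\chi\in\Spdr$ that transports $(\rdd,\sigma)$ to $(\rnnp\times\rnns,\sigma_{n'}\oplus\sigma_{n''})$ in the required form $(x,\xi)\mapsto(x',x'';\xi',\xi'')$, mapping the first factor onto $\Sbots$ and the second onto $S$.

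Second, I would feed the two algebraic facts above into the polarization identity $q(z;w)=\sigma(z,Fw)$. Since $F=JQ$ with $Q$ symmetric, the Hamiltonian identity $\sigma(F\,\cdot\,,\,\cdot\,)=-\sigma(\,\cdot\,,F\,\cdot\,)$ holds, and taking real and imaginary parts shows that both $\re F$ and $\im F$ are Hamiltonian; this forces $\Sbots$ to be invariant under $\re F$ and $\im F$ as well. Now for $w\in S$ one has $Fw=i\im F\,w\in iS$ because $\re F w=0$ and $\im F\,w\in S$; hence for $z\in\Sbots$ the cross-term $q(z;w)=i\sigma(z,\im F\,w)$ vanishes by symplectic orthogonality, while for $z,w\in S$ the same identity $q(z;w)=i\sigma(z,\im F w)$ lies in $i\bR$. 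Therefore the pullback $q\circ\chi$ splits as $q_1(x',\xi')+iq_2(x'',\xi'')$, with $q_1=\restr{q}{\Sbots}$, Hamilton map $F_1=\restr{F}{\Sbots}$, and $q_2(z'')=\sigma(z'',\im F_2 z'')$ for $F_2=\restr{F}{S}$ a real quadratic form; the estimate $\re q_1\le 0$ is inherited trivially by restriction.

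Third, triviality of the singular space $S_1$ of $q_1$ follows formally: if $z\in S_1\subset\Sbots$, the invariance of $\Sbots$ under $\re F$ and $\im F$ yields $\re F_1(\im F_1)^j z=\re F(\im F)^j z$, which vanishes for $j=0,\ldots,2n'-1$ by definition of $S_1$, and for all $j\ge 0$ by the standard kernel-intersection-stabilization argument (Cayley--Hamilton again). Hence $z\in S\cap\Sbots=\{0\}$, the last equality being the defining property of a symplectic subspace. The step I expect to require most care is precisely the mutual invariance bookkeeping in the second stage: one must use the Hamiltonian symmetry of $F$ to propagate the invariances $\re F|_S=0$ and $\im F(S)\subset S$ to $\Sbots$, since the whole splitting --- cross-term cancellation, purely imaginary character of $\restr{q}{S}$, and triviality of $S_1$ --- hinges on this. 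Everything else is essentially structural.
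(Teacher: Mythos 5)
Your argument is correct, and it is essentially the standard one from Hitrik--Pravda-Starov, which the paper itself does not reproduce but simply cites as \cite{hitrik}: the key points ($\re F|_S=0$, stability of $S$ under $\im F$ via Cayley--Hamilton, propagation of the invariances to $\Sbots$ by the Hamiltonian skew-symmetry of $\re F$ and $\im F$, vanishing of the cross terms, and triviality of $S_1$ from $S\cap\Sbots=\{0\}$) are exactly the ingredients of the cited proof. No gaps to report.
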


\begin{remark}\label{rem-metap}
	Assuming the setting of Proposition \ref{prop-split}, the symplectic covariance of Weyl calculus \eqref{eq-sympcov} implies that 
	\begin{equation}
		(q\circ \chi)^\w = \mu(\chi)^{-1}q^\w \mu(\chi),
	\end{equation} where $\mu(\chi)$ is a metaplectic operator associated with $\chi$. The companion generated semigroups then satisfy
	\begin{equation}
		e^{t(q\circ\chi)^\w} = \mu(\chi)^{-1} e^{tq^\w} \mu(\chi), \qquad t\ge 0. 
	\end{equation} 
	With the notation of Proposition \ref{prop-split}, since $\rdd = \rnnp \otimes \rnns$ we have the tensorization 
	\begin{equation}
		e^{t(q\circ \chi)^\w} = (e^{tq_1^\w} \otimes I)(I \otimes e^{itq_2^\w}), 
	\end{equation} hence
	\begin{equation}
		e^{tq^\w} = \mu(\chi)^{-1}(e^{tq_1^\w} \otimes I)(I \otimes e^{itq_2^\w}) \mu(\chi). 
	\end{equation}
\end{remark}

We are now ready to give a proof of Theorem \ref{thm-main-mod}, which is divided in three steps for the sake of exposition. Other equivalent approaches can be considered as well (e.g., Schur-type $M^1-M^\infty$ interpolation as in the proof of \cite[Theorem 3.3]{T_tams}). 

\begin{proof}[Proof of Theorem \ref{thm-main-mod}]
	\noindent \textbf{Step 1.} \textit{The Gabor matrix of $e^{tq^\w}$.} 
	
Consider the setting of Proposition \ref{prop-split} with $n'=d-n$ and $n''=n$. Let $g,\gamma \in \cS(\rd)\smo$ to be determined later, and $z,w \in \rdd$. In light of Remark \ref{rem-metap}, the Gabor matrix of $e^{tq^\w}$ reads
\begin{equation}
	K_{e^{tq^\w}}^{(\gamma,g)}(w,z) \coloneqq \lan e^{tq^\w} \pi(z)g, \pi(w) \gamma \ran = \lan e^{t(q\circ \chi)^\w} \mu(\chi) \pi(z) g, \mu(\chi) \pi(w)\gamma \ran. 
\end{equation} The intertwining property of metaplectic operators \eqref{eq-metap-intertw}, namely 
\begin{equation}
	\pi(\chi z) = c_\chi(z) \mu(\chi) \pi(z) \mu(\chi)^{-1}, 
\end{equation} for some $c_\chi(z) \in \bC$ with $|c_\chi(z)|=1$, implies 
\begin{equation}
	\abs{\lan e^{tq^\w} \pi(z)g, \pi(w) \gamma \ran} = \abs{\lan e^{t(q\circ \chi)^\w} \pi(\chi z) \mu(\chi) g, \pi(\chi w) \mu(\chi) \gamma \ran}.
\end{equation}
In light of the splitting $\rdd = \Sbots \oplus^{\bots} S = \rnnp \otimes \rnns$ induced by $\chi$, it is non restrictive to choose $g = \mu(\chi)^{-1}(g'\otimes g'')$ and $\gamma = \mu(\chi)^{-1}(\g'\otimes \g'')$ for arbitrarily chosen $g',\gamma' \in \cS(\rnp)\smo$ and $g'',\gamma'' \in \cS(\rns)\smo$. 

Straightforward computations show that the Gabor matrix of the composition of operators $A,B$ satisfies
\begin{equation}
	\abs{K_{AB}^{(\mu(\chi)\g,\mu(\chi)g)}(\chi w,\chi z)} \le \irdd \abs{K_{A}^{(\mu(\chi)\g,h)}(\chi w,u)} \abs{K_{B}^{(h,\mu(\chi)g)}(u,\chi z)} \dd{u},
\end{equation} for every $h \in \cS(\rdd)\smo$. This is precisely the situation under our attention, with $A=e^{tq_1^\w}\otimes I$ and $B=I \otimes e^{itq_2^\w}$. 

Let us focus on $K_{A}$ first. We conveniently choose $h=(h' \otimes h'')$ for some $h' \in \cS(\rnp)\smo$ and $h'' \in \cS(\rns)\smo$, and write $\chi y = (\tilde y', \tilde y'') \in \rnnp \times \rnns$ for every $y \in \rdd$, so that 
\begin{equation}
	\pi(\chi y)(h'\otimes h'') = \pi(\tilde y')h' \otimes \pi(\tilde y'')h''. 
\end{equation}
Therefore, for every $M,N \in \bN$ and $u=(u',u'') \in \rnnp \times \rnns = \rdd$ we have
\begin{align}
 \abs{K_{A}^{(\mu(\chi)\g,h)}(\chi w,u)} & = \abs{\lan (e^{tq_1^\w} \otimes I) \pi(u', u'')(h'\otimes h'') \ran, \pi(\tilde w',\tilde w'')(\g'\otimes \g'')} \\
 & = \abs{\lan e^{tq_1^\w} \pi(u') h' ,\pi(\tilde w')\g'\ran} \abs{\lan \pi(u'') h'',\pi(\tilde w'')\g'' \ran} \\
 & \lesssim_{n',N,M,q} e^{-t\mu'} \lan \tilde w'- u' \ran^{-2N} \lan \tilde w'' - u'' \ran^{-M},
\end{align} where in the last step we resorted to Theorem \ref{thm-gabmatdec} and Lemma \ref{lem-stft}, with $\mu'$ being defined accordingly. 

Consider now the Gabor matrix of $B$. For every $M,N \in \bN$ we have
\begin{align}
	\abs{K_{B}^{(h,\mu(\chi)g)}(u,\chi z)}& = \abs{\lan (I \otimes e^{itq_2^\w})\pi(\tilde z',\tilde z'')(g'\otimes g'') , \pi(u',u'')(h'\otimes h'') \ran} \\
	& = \abs{\lan \pi(\tilde z')g',\pi(u') h' \ran} \abs{\lan e^{itq_2^\w} \pi(\tilde z'')g'',\pi(u'') h'' \ran} \\
	& \lesssim_{n'',M,N} \lan u'- \tilde z' \ran^{-2N} (\det \Sigma_t'')^{-1/2} \lan D_tU_t(u''-H_t \tilde z'')\ran^{-M},
\end{align} where in the last step we resorted to Lemma \ref{lem-stft}  and Theorem \ref{thm-gabmatdisp}, $(U_t,V_t,\Sigma_t'')$ being any Euler decomposition of the classical flow $H_t \in \Sp(n'',\bR)$ associated with $q_2$. 

To sum up, we have
\begin{multline}
		\abs{K_{e^{tq^\w}}^{(\g,g)}(w,z)} \le \irdd \abs{K_{A}^{(\mu(\chi)\g,h)}(\chi w,u)} \abs{K_{B}^{(h,\mu(\chi)g)}(u,\chi z)} \dd{u} \\
	\lesssim_{d,N,M,q} e^{-t\mu'} (\det \Sigma_t'')^{-1/2} \Big( \int_{\rnnp}  \lan \tilde w'- u' \ran^{-2N} \lan u'- \tilde z' \ran^{-2N} \dd{u'} \Big) \\ \times \Big( \int_{\rnns} \lan \tilde w'' - u'' \ran^{-M} \lan D_tU_t(u''-H_t \tilde z'')\ran^{-M} \dd{u''} \Big). 
\end{multline}
Concerning the first integral, it suffices to choose $N>n'$ to obtain by subconvolutivity (cf.\ \eqref{eq-subconv}) that
\begin{equation}
 \int_{\rnnp}  \lan \tilde w'- u' \ran^{-2N} \lan u'- \tilde z' \ran^{-2N} \dd{u'} \lesssim_N \lan \tilde w' - \tilde z' \ran^{-2N}. 
\end{equation}
Concerning the second integral,
since $U_t$ is an orthogonal matrix we have
\begin{multline}
	\int_{\rnns} \lan \tilde w'' - u'' \ran^{-M} \lan D_tU_t(u''-H_t \tilde z'')\ran^{-M} \dd{u''} \\ = \int_{\rnns} \lan U_t \tilde u'' \ran^{-M} \lan D_tU_t(u''+\tilde w'' -H_t \tilde z'') \ran^{-M} \dd{u''} \\ 
	 = \int_{\rnns} \lan \tilde u'' \ran^{-M} \lan D_tu''+D_tU_t(\tilde w'' -H_t \tilde z'') \ran^{-M} \dd{u''} \\ \lesssim_M \lan D_tU_t(\tilde w'' - H_t \tilde z'') \ran^{-M},
\end{multline} provided that $M>n''$, where in the last step we have made repeated use of \cite[Lemma 2.6]{CNT_dispdiag20}. Then
\begin{equation}\label{eq-gabmat-mix}
	\abs{K_{e^{tq^\w}}^{(\g,g)}(w,z)} \lesssim_{d,N,M,q} e^{-t\mu'} (\det \Sigma_t'')^{-1/2} \lan \tilde w' - \tilde z' \ran^{-2N} \lan D_tU_t(\tilde w'' - H_t \tilde z'') \ran^{-M}.  
\end{equation}

\noindent \textbf{Step 2.} \textit{Continuity on modulation spaces.}

Studying the continuity $e^{tq^\w}$ on $M^p(\rd)$ is equivalent to investigating the boundedness on $L^p(\rdd)$ of the integral operator
\begin{equation}
	T F(w) \coloneqq \irdd K_{e^{tq^\w}}^{(\g,g)}(w,z) F(z) \dd{z}. 
\end{equation} To this aim, if we were able to find $C(t)>0$ such that
\begin{equation}
	\irdd \abs{K_{e^{tq^\w}}^{(\g,g)}(w,z)} \dd{w} \le C(t), \qquad \irdd \abs{K_{e^{tq^\w}}^{(\g,g)}(w,z)} \dd{z} \le C(t),
\end{equation} then by Young's integral inequality we would infer
\begin{align}
	\| TF \|_{L^p} & =  \Big( \irdd \abs{ \irdd K_{e^{tq^\w}}^{(\g,g)}(w,z) F(z) \dd{z}}^p \dd{w}\Big)^{1/p} \\
	& \le C(t) \|F\|_{L^p}. 
\end{align}
Choose $N=M/2$. Since  $D_tU_t H_t =E_tV_t$, with $E_t= (I \oplus (\Sigma_t'')^{-1})$, then
\begin{equation}
	(1+\abs{\tilde w' - \tilde z'})^{-M} (1+\abs{D_tU_t(\tilde w'' - H_t \tilde z'')})^{-M} \le  (1+\abs{\tilde w' - \tilde z'} + \abs{D_t U_t \tilde w'' - E_t V_t \tilde z''})^{-M},
\end{equation} and the substitution $y = (I\oplus D_tU_t)\chi w - (I \oplus E_tV_t)\chi z$ yields
\begin{equation}
	\max\Big\{ \irdd \abs{K_{e^{tq^\w}}^{(\g,g)}(w,z)} \dd{w}, \irdd \abs{K_{e^{tq^\w}}^{(\g,g)}(w,z)} \dd{z} \Big\} \lesssim e^{-t\mu'} (\det \Sigma_t'')^{1/2},
\end{equation} provided that $M>2d$. To conclude, for every $1 \le p \le \infty$ and $f \in M^p(\rd)$, we have 
\begin{equation}
	\| e^{tq^\w} \|_{M^p} \lesssim e^{-t\mu'}(\det \Sigma_t'')^{1/2} \| f\|_{M^p}. 
\end{equation} 
 
 \noindent \textbf{Step 3.} \textit{Refining the dispersive dependence.}
 
 To conclude, note that the estimate above is certainly not optimal in the case where $p=2$, due to the appearance of the dispersive factor. On the other hand, since $M^2(\rd)=L^2(\rd)$ with equivalence of norms and $L^2(\rd)= L^2(\rnp;L^2(\rns))$, in this case we have
 \begin{align}
 	\|e^{tq^\w}\|_{L^2 \to L^2} & = \| \mu(\chi)^{-1} e^{t(q\circ \chi)^\w} \mu(\chi) \|_{L^2 \to L^2} \\
 	& = \| e^{tq_1^\w} \otimes e^{itq_2^\w}\|_{L^2\to L^2} \\
 	& = \| e^{tq_1^\w}\|_{L^2 \to L^2} \\
 	& \lesssim e^{-t\mu'}.  
 \end{align}
As far as complex interpolation is concerned, modulation spaces behave like $L^p$ spaces (cf.\ \cite[Proposition 2.3.17]{CR_book}), so interpolating $M^1-M^2$ and $M^2- M^\infty$ yields the claim. 
\end{proof}

\begin{remark}
	We emphasize that boundedness results for $e^{tq^\w}$ on $M^p(\rd)$ are not expected to extend to weighted or mixed modulation spaces $M^{p,q}(\rd)$ in general, since metaplectic operators (involved in both the symbol decomposition and in $e^{itq_2^\w}$) generally fail to be bounded there --- see \cite{FS} for precise characterizations in this regard.
\end{remark}

\section{Some examples} \label{sec-examples}

Motivated by the examples discussed in \cite{hitrik,pravda_18}, we briefly present a number of applications of the previous results. 

\subsection{A globally elliptic model: the Hermite operator}
Consider the heat equation for the harmonic oscillator on $\rd$, that is
\begin{equation}
	\partial_t u(t,x)= (\Delta_x - x^2)u(t,x), \qquad (t,x) \in [0,+\infty) \times \rd, 
\end{equation} so that $q(x,\xi)=-(x^2+\xi^2)$. The associated matrices are thus given by $Q=-I$ and $F=JQ=-J$. It is then clear that $F$ is diagonalizable, the eigenvalues being $\pm i$ (each with multiplicity $d$), and the singular space of $q$ is trivial:
\begin{equation}
	S= \ker (\re F) = \{0\}. 
\end{equation}
The Mehler formula \eqref{eq-intro-mehler} thus reads
\begin{equation}
	\Theta_t(y) = (\cosh t)^{-d}e^{-(\tanh t)y^2}, \quad y \in \rdd.
\end{equation}
We are in the position to consider direct application of Theorem \ref{thm-gabmatdec}, where $\mu=d$, 
hence the Gabor matrix of $e^{tq^\w}$ satisfies
\begin{equation}
	\abs{\lan e^{tq^\w} \pi(z)g,\pi(w)\gamma \ran} \lesssim e^{-td} \lan w-z \ran^{-2N}, \qquad N \in \bN,
\end{equation} 
from which we infer boundedness of $e^{tq^\w}$ on every modulation space $M^{p,q}(\rd)$, $1 \le p,q \le \infty$, with operator norm $\lesssim e^{-td}$. This result improves the findings in \cite{cordero_21} and is consistent with those in \cite{bhimani_21} --- the latter have in fact a significantly broader scope (i.e., boundedness $M^{p_1,q_1}\to M^{p_2,q_2}$) since they exploit the special pseudo-differential structure of $e^{tq^\w}$. 

The case of fractional powers of $q^\w$, trated in \cite{bhimani_21}, can be approached via subordination as detailed in Remark \ref{rem-fractional}, so that for every $0<\nu<1$ we have
\begin{equation}
	\abs{\lan e^{t(q^\w)^\nu} \pi(z)g,\pi(w)\gamma \ran} \lesssim e^{-td^\nu} \lan w-z \ran^{-2N}, \qquad N \in \bN.
\end{equation} 

\subsection{A degenerate elliptic model: the special Hermite operator}
The twisted Laplacian, also known as the special Hermite operator, is the differential operator on $\rdd$ defined by 
\begin{equation}
	\cL = q^\w, \qquad q(z,\zeta)=- \sum_{j=1}^d \Big[\Big(\xi_j - \frac{y_j}{2} \Big)^2 + \Big(\eta_j+\frac{x_j}{2} \Big)^2\Big], \quad z=(x,y), \, \zeta=(\xi,\eta) \in \rdd.
\end{equation}
The associated matrices are
\begin{equation}
	Q= \begin{bmatrix} -I/4 & -J/2 \\ J/2 & -I \end{bmatrix}, \qquad F = JQ = \begin{bmatrix} J/2 & -I \\ I/4 & J/2 \end{bmatrix}. 
\end{equation}
The singular space of $q$ is non-trivial, given by
\begin{equation}
	S= \ker F = \{ (z,Jz/2) : z \in \rdd \}, 
\end{equation} and inherits the symplectic structure --- we have that
\begin{equation}
	\Sbots = \{ (z,-Jz/2) : z \in \rdd\}, \qquad S \cap \Sbots = \{0\}. 
\end{equation} Note that $\cL$ fails to be elliptic even when restricted to $S$. 

The non-triviality of the singular space reflects into the fact that, although $F$ is diagonalizable,  we cannot directly apply Theorem \ref{thm-gabmatdec} since its eigenvalues are $0$ and $\pm i$. This effect is clearly evident when the Mehler formula \eqref{eq-intro-mehler} for $e^{tq^\w}$ is taken into account:
\begin{equation}
	e^{t\cL} = \Theta_t^\w, \qquad \Theta_t(y) = (8\pi^2 \cosh t)^{-d}e^{-(\tanh t)|y_2-Jy_1/2|^2}, \quad y=(y_1,y_2) \in \rdd \times \rdd. 
\end{equation} In particular, note the lack of decay of the symbol when $y \in S$. 

Although slight modifications of the argument of Theorem \ref{thm-gabmatdec} would readily give the same conclusion, given the peculiar form of the symbol $\Theta_t$, let us stick to the frameworks of Theorem \ref{thm-main-mod}. It is easy to realize that $\restr{Q}{\Sbots} = -I_{2d}$, hence $\restr{F}{\Sbots}=-J$, having $\pm i$ as eigenvalues with multiplicity $d$ each. As a result, for $g,\gamma \in \cS(\rd)\smo$ and $z,w \in \rdd$ we obtain 
\begin{equation}
	\abs{ \lan e^{t\restr{q^\w}{\Sbots}} \pi(z)g,\pi(w)\gamma \ran} \lesssim e^{-td} \lan w-z \ran^{-2N}, \qquad N \in \bN,
\end{equation} and thus, since $\im F=0$, Theorem \ref{thm-main-mod} yields 
\begin{equation}
	\norm{e^{tq^\w} f}_{M^p} \lesssim e^{-td} \norm{f}_{M^p}
\end{equation} for every $1 \le p \le \infty$, in accordance with the results obtained in \cite{T_tams} --- which are actually more powerful, since the special twisted structure of the operator $\cL$ is exploited. The contents of Remark \ref{rem-fractional} apply here, showing boundedness on $M^p$ of the fractional semigroup $e^{-t\cL^\nu}$, $0 < \nu < 1$, with operator norm $\lesssim e^{-td^\nu}$. 

\subsection{A non-elliptic model: the Kramers-Fokker-Planck operator} A prominent example of non-selfadjoint, non-elliptic quadratic model is the second-order Kramers-Fokker-Planck operator with quadratic potential. In fact, the pioneering analysis of such operator in \cite{helffer,herau} stimulated the subsequent theory of singular spaces. 

We are focusing on the quadratic operator defined on $\bR^2$ by
\begin{equation}
	\cK=-\Delta_v +\frac14 v^2 -\frac12 +v\partial_x-(\partial_xV(x))\partial_v = -q^\w-\frac12, \qquad (x,v) \in \bR^2, 
\end{equation} where $V(x)=ax^2/2$ for some $a \in \bR\smo$ and 
\begin{equation}
	q(x,v,\xi,\eta) = -\eta^2 -\frac14 v^2-i(v\xi-ax\eta).
\end{equation}
The Hamilton matrix associated with $q$ is
\begin{equation}
	F = \begin{bmatrix}
		0 & -i/2 & 0 & 0 \\ ia/2 & 0 & 0 & -1 \\ 0 & 0 & 0 & -ia/2 \\ 0 & 1/4 & i/2 & 0
	\end{bmatrix}. 
\end{equation} It is not difficult to check that the singular space of $q$ is trivial, and that $F$ is diagonalizable if $a\ne 1/4$. In that case, the eigenvalues $\lambda_1,\lambda_2$ with positive imaginary part are easily computed:
\begin{equation}
	\lambda_1= (-1+\sqrt{1-4a})\frac{i}{4}, \qquad 	\lambda_2= (1+\sqrt{1-4a})\frac{i}{4} \qquad (a<0),
\end{equation} 
\begin{equation}
	\lambda_1= (1-\sqrt{1-4a})\frac{i}{4}, \qquad 	\lambda_2= (1+\sqrt{1-4a})\frac{i}{4} \qquad (a>0, \, a \ne \tfrac14).
\end{equation} 
Theorem \ref{thm-gabmatdec} thus applies to $e^{tq^\w}$ with 
\begin{equation}
	\mu = \im{\lambda_1} + \im{\lambda_2} = \begin{cases} \frac{\sqrt{1-4a}}{2} & (a<0) \\ \frac12, & (a>0, \, a \ne \tfrac14) \end{cases} 
\end{equation}
from which we infer, for every $1 \le p \le \infty$,
\begin{equation}
	\norm{e^{-t\cK}}_{M^p \to M^p} = e^{t/2}\norm{e^{tq^\w}}_{M^p \to M^p} \lesssim \begin{cases} e^{-t \frac{\sqrt{1-4a}-1}{2}} & (a<0) \\ 1 & (a>0, \, a \ne \tfrac14). \end{cases}
\end{equation}

\subsection{Quadratic operators with non-trivial real and imaginary parts}

Consider the equation
\begin{equation}
	\partial_t u(t,x) = (2i\Delta_x - x^2) u(t,x), \qquad (t,x) \in [0,+\infty) \times \rd,
\end{equation} which is of quadratic type with symbol $q(x,\xi)=-(x^2+2i\xi^2)$. The Hamilton matrix is
\begin{equation}
	F = \begin{bmatrix}
		O & -2i I \\ I & O
	\end{bmatrix}, 
\end{equation} hence diagonalizable with eigenvalues $\pm (-1+i)$, each with multiplicity $d$. To compute the singular space we note that 
\begin{equation}
	\ker (\re F) = \{0\} \times \rd, \qquad \ker (\re F \im F) = \rd \times\{0\},
\end{equation} hence we infer $S=\{0\}$. Theorem \ref{thm-gabmatdec} thus gives
\begin{equation}
	\abs{\lan e^{tq^\w} \pi(z)g,\pi(w)\gamma \ran} \le C e^{-td} (1+\abs{w-z})^{-2N}. 
\end{equation}

A simple recipe to produce less trivial examples of joint diffusive/dispersive effects, which somehow mimics the decomposition of Proposition \ref{prop-split}, is described in \cite{hitrik,pravda_11}. Let $q_1(x_1,\xi_1)$ be a complex quadratic form on $\bR^{2d_1}$ with $\re q_1 \le 0$ and trivial singular space $S_1=\{0\}$. Consider now a real quadratic form $q_2(x_2,\xi_2)$ on $\bR^{2d_2}$. Then it is not difficult to show that the form 
\begin{equation}\label{eq-qform-make}
	q(x_1,x_2,\xi_1,\xi_2) = q_1(x_1,\xi_1) + i q_2(x_2,\xi_2),
\end{equation} defined on $\bR^{2(d_1+d_2)}$, has a non-trivial singular space given by
\begin{equation}
	S=\{(0,a,0,b) : a,b \in \bR^{d_2} \}, 
\end{equation} hence $\Sbots = \{(u,0,v,0) : u,v \in \bR^{d_1} \}$. 

Let us discuss an example in this spirit, by combining an oscillator-type heat component with a free particle Schr\"odinger dynamics:
\begin{equation}
	q(x_1,x_2,\xi_1,\xi_2) = -(x_1^2 + \xi_1^2) +i\xi_2^2. 
\end{equation}
The Hamilton matrix is thus given by
\begin{equation}
	F= \begin{bmatrix}
		O_{d_1+d_2} & -I_{d_1} \oplus i I_{d_2} \\ I_{d_1} \oplus O_{d_2} & O_{d_1+d_2}
	\end{bmatrix},
\end{equation} and we have 
\begin{equation}
	\restr{F}{\Sbots} = -J_{d_1}, \qquad \im \restr{F}{S} = \begin{bmatrix}
		O_{d_2} & I_{d_2} \\ O_{d_2} & O_{d_2}
	\end{bmatrix}.
\end{equation}
Then, for $t \ge 0$,  
\begin{equation}
	e^{2t\im \restr{F}{S}} = I + 2t\im \restr{F}{S} = \begin{bmatrix}
	I_{d_2} & 2t I_{d_2} \\ O_{d_2} & I_{d_2}
	\end{bmatrix},
\end{equation} whose $d_2$ largest singular values coincide: $\sigma_1(t) = \ldots = \sigma_{d_2}(t) \asymp (1+t)$. To conclude, for every $1 \le p \le \infty$ we have
\begin{equation}
	\norm{e^{tq^\w}}_{M^p \to M^p} \lesssim e^{-td_1} (1+t)^{d_2\abs{\frac12 - \frac1p}}. 
\end{equation} 

\section*{Acknowledgements} 
	We gratefully acknowledge helpful discussions on the topics of this note with Fabio Nicola, Federico Stra and Patrik Wahlberg. 
	
	The author is member of Gruppo Nazionale per l’Analisi Matematica, la Probabilit\`a e le loro Applicazioni (GNAMPA) --- Istituto Nazionale di Alta Matematica (INdAM). The present research has been developed as part of the activities of the GNAMPA-INdAM project ``Analisi spettrale, armonica e stocastica in presenza di potenziali magnetici'', award number (CUP): E5324001950001. 
	
	The author reports that there are no competing interests to declare. No new data were generated or analysed in support of this research.

\end{document}